\theoremstyle{plain}
\newtheorem{theorem}{Theorem}[section]
\crefname{theorem}{Theorem}{Theorems}
\crefname{proposition}{Proposition}{Propositions}
\newtheorem{corollary}[theorem]{Corollary}
\crefname{corollary}{Corollary}{Corollaries}
\newtheorem{lemma}[theorem]{Lemma}
\crefname{lemma}{Lemma}{Lemmas}
\crefname{conjecture}{Conjecture}{Conjectures}
\crefname{problem}{Problem}{Problem}
\newtheorem{claim}[theorem]{Claim}
\crefname{claim}{Claim}{Claims}
\crefname{observation}{Observation}{Observations}
\crefname{setup}{Setup}{Setups}
\crefname{fact}{Fact}{Facts}
\crefname{algorithm}{Algorithm}{Algorithms}
\crefname{remark}{Remark}{Remarks}
\crefname{example}{Example}{Examples}
\theoremstyle{definition}
\crefname{definition}{Definition}{Definitions}
\crefname{construction}{Construction}{Constructions}
\crefname{question}{Question}{Questions}
\numberwithin{equation}{section}
\def\eps{\varepsilon}
\renewcommand{\int}[1]{\mathop{\mkern 0mu\mathrm{int}}\nolimits(#1)}
\newcommand\ceil[1]{\left\lceil#1\right\rceil}
\newcommand\cT{\mathcal{T}}
  \newcommand{\labelinthm}[1]{%
     \label{temp#1}
     \protected@write \@auxout {}{\string \newlabel{#1}{{\emph{\ref{temp#1}}}{\thepage}{\emph{\ref{temp#1}}}{temp#1}{}} }%
  }
\title{Spanning trees with large maximum degrees}
\begin{document}
\author{Jun Yan\thanks{Mathematics Institute, University of Oxford, Oxford OX2 6GG, UK. Email: \url{jun.yan@maths.ox.ac.uk}. Supported by
ERC Advanced Grant 883810.}}

\maketitle

\begin{abstract}
The celebrated result of Koml\'os, S\'ark\"ozy, and Szemer\'edi states that for any $\eps>0$, there exists $0<c<1$, such that for all sufficiently large $n$, every $n$-vertex graph $G$ with $\delta(G)\geq(1/2+\eps)n$ contains every $n$-vertex tree with maximum degree at most $cn/\log n$. This is best possible up to the value of $c$. In this paper, we extend this result to trees with higher maximum degrees, and prove that for $\Delta\gg n/\log n$, roughly speaking, $\delta(G)\geq n-n^{1-(1+o(1))\Delta/n}$ is the asymptotically optimal minimum degree condition which guarantees that $G$ contains every $n$-vertex spanning tree with maximum degree at most $\Delta$. We also prove the corresponding statements in the random graph setting. 
\end{abstract}

\section{Introduction}
An important family of problems in extremal graph theory, known as Dirac-type problems, asks for the optimal minimum degree conditions one needs to impose on an $n$-vertex host graph $G$ in order to guarantee that $G$ contains some desired substructures. This is named after the famous Dirac's Theorem~{\cite[Theorem 3]{D}} from 1952, which says that for $n\geq3$, $\delta(G)\geq n/2$ ensures the existence of a Hamilton cycle. Since then, Dirac-type problems have been studied extensively for many different structures. For more information on these and other related problems, we refer interested readers to the survey of K\"uhn and Osthus~\cite{KO}. 

Here, we focus on the case of finding spanning trees with $n$ vertices. For any $n$ and $\Delta$, let $\cT(n,\Delta)$ be the set of $n$-vertex trees with maximum degrees at most $\Delta$. What is the optimal minimum degree condition on an $n$-vertex host graph $G$ that ensures $G$ contains a copy of every $T\in\cT(n,\Delta)$? 

The first result of this type was proved in 1995 by Koml\'os, S\'ark\"ozy, and Szemer\'edi~{\cite[Theorem 1.1]{KSS1}}. They showed that for any constants $\eps,\Delta>0$, and for all sufficiently large $n$, every $n$-vertex graph $G$ with $\delta(G)\geq(1/2+\eps)n$ contains every tree in $\cT(n,\Delta)$. The minimum degree condition was improved in 2010 by Csaba, Levitt, Nagy-Gy\"orgy, and Szemer\'edi~{\cite[Theorem 1]{CLNGS}}, who showed that $\delta(G)\geq n/2+C\log n$ is sufficient for some constant $C$ depending on $\Delta$, and that this is tight up to the value of $C$. 

In 2001, Koml\'os, S\'ark\"ozy, and Szemer\'edi~{\cite[Theorem 1.2]{KSS2}} improved their earlier result in another direction by showing that for any $\eps>0$, there exists $0<c<1$, such that for all sufficiently large $n$, every $n$-vertex graph $G$ with $\delta(G)\geq(1/2+\eps)n$ contains every tree in $\cT(n,cn/\log n)$. Moreover, they showed that this is optimal up to the value of $c$ with a random graph construction. 

In this paper, we consider the natural question of what happens when $\Delta\gg n/\log n$. By Koml\'os, S\'ark\"ozy, and Szemer\'edi's counterexample~\cite{KSS2}, we need to increase the minimum degree condition in order to find all trees in $\cT(n,\Delta)$. Our main result below shows that in this range, 
if an $n$-vertex graph $G$ has minimum degree roughly $n-n^{1-(1+o(1))\Delta/n}$, then $G$ contains every tree in $\cT(n,\Delta)$. 

\begin{theorem}\label{thm:main}
For any $\eps>0$, there exists $C>1$, such that the following is true for all sufficiently large $n$. Let $\Delta\geq Cn/\log n$, and suppose $k=\ceil{(n-1)/\Delta}-1\geq2$. If $G$ is an $n$-vertex graph with $\delta(G)\geq n-n^{1-(1+\eps)/k}$, then $G$ contains every tree in $\cT(n,\Delta)$.
\end{theorem}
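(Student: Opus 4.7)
Let $d := n^{1-(1+\eps)/k}$ and $H := \bar G$, so that the minimum-degree hypothesis $\delta(G)\ge n-d$ is equivalent to $\Delta(H)\le d$. The numerical engine behind the theorem is the fact that for a uniformly random $k$-tuple $(u_1,\ldots,u_k)\in V(G)^k$,
\[
\mathbb{E}\Bigl[\,\Bigl|\,\bigcap\nolimits_{i=1}^{k} N_H(u_i)\,\Bigr|\,\Bigr] \;=\; \sum_{v\in V(G)}\!\Bigl(\tfrac{\deg_H(v)}{n}\Bigr)^{\!k} \;\le\; n\bigl(\tfrac{d}{n}\bigr)^{k} \;=\; n^{-\eps},
\]
which is strictly less than $1$; this ``sub-one'' bound is precisely what dictates the threshold in the theorem, and my plan is to design everything so as to leverage it.

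In Stage~1 I would decompose $T$: first extract a ``skeleton'' $S_T\subseteq V(T)$ consisting of all vertices of $T$ of degree above a threshold $\tau\approx n^{1/k}$, together with the Steiner tree in $T$ joining them. By the degree-sum formula the number of heavy vertices is $O(n/\tau)$, so $|S_T|$ is small, and $T\setminus V(S_T)$ decomposes into pendant subtrees of maximum degree $\le\tau$. Next I would select a small ``reservoir'' $L\subseteq V(T)$ of leaves of $T$, of size $|L|\approx Kd$ for a large constant $K$, chosen so that no vertex of $T$ is the parent of more than $\ell_{\max}:=Kd/k$ leaves in $L$. For ``generic'' trees this spreading is easy to arrange; for trees whose leaves concentrate on very few parents, such as extremal caterpillars, we allow the reservoir to include whole pendant substars or bare-path segments instead of individual leaves.

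In Stage~2 I would order $V(T)\setminus L$ as $v_1,v_2,\dots,v_{n-|L|}$ in BFS order from a fixed root and embed greedily at random: $\phi(v_i)$ is a uniformly random unused $G$-neighbor of $\phi(v_{p(i)})$. This candidate set has size at least $n-d-(i-1)\ge|L|-d>0$, so the process never stalls. A concentration argument together with the pseudorandomness bound above would show that, with high probability, the resulting leftover set $F:=V(G)\setminus\phi(V(T)\setminus L)$ of size $|L|$ is pseudorandom with respect to $\phi$, in the sense that for every $U\subseteq V(T)\setminus L$ with $|U|\ge k$,
\[
F\cap\bigcap\nolimits_{u\in U}N_H(\phi(u))=\emptyset.
\]
In Stage~3 I would then embed $L$ into $F$ as a perfect matching in the bipartite graph $B$ on $L\cup F$ with $\ell\sim x$ iff $x\in N_G(\phi(p(\ell)))$. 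For a subset $S\subseteq L$ with distinct-parent set $P(S)$, Hall's condition $|N_B(S)|\ge|S|$ is verified in three regimes: if $|P(S)|>d$ then no $x\in F$ can be $H$-adjacent to all of $\phi(P(S))$ (else $\deg_H(x)>d$), so $N_B(S)=F$; if $k\le|P(S)|\le d$ the pseudorandomness of $F$ from Stage~2 forces $F\cap\bigcap_{p\in P(S)}N_H(\phi(p))=\emptyset$, so again $N_B(S)=F$; and if $|P(S)|<k$ the spreading of $L$ bounds $|S|\le(k-1)\ell_{\max}$, while the single-parent bound gives $|N_B(S)|\ge|F|-d$, and the two sides reconcile once $K$ is large enough.

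The hard part will be executing Stage~1 (and the corresponding matching in Stage~3) uniformly across \emph{all} trees in $\cT(n,\Delta)$, and especially for the extremal caterpillar-like trees with $k$ heavy centers: there the number of leaf-parents is only $k<d$, so no reservoir of distinct-parent leaves exists and the simple leaf-matching argument breaks down. One must instead let the reservoir be a family of small pendant stars (or bare-path segments) and turn the matching step into a matching of whole gadgets to compatible positions in $F$, all while propagating the pseudorandomness of Stage~2 through the richer combinatorial structure. Reconciling these demands without losing the $n^{-\eps}$ slack is where I expect the main technical difficulty of the proof to lie.
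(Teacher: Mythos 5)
Your numerical observation that the expected common non-neighbourhood of a random $k$-tuple is $n^{-\eps}<1$ is indeed the slack that drives the theorem, and it reappears (essentially verbatim) in the paper's Claim~\ref{claim1}. You also correctly identify the genuine bottleneck: trees whose leaves concentrate on roughly $k$ heavy parents, for which a simple distinct-parent reservoir does not exist. Unfortunately, neither the pseudorandomness claim in your Stage~2 nor the resolution of your flagged hard case actually goes through as stated, and the paper's proof takes a structurally different route around both obstacles.

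The specific gap in Stage~2 is the strength of the pseudorandomness assertion. You want that, after the random greedy embedding, \emph{every} $k$-subset $U$ of the already-embedded vertices satisfies $F\cap\bigcap_{u\in U}N_H(\phi(u))=\emptyset$. By Markov, a single $k$-tuple has common non-neighbourhood nonempty with probability at most about $n^{-\eps}$, but the number of $k$-subsets of embedded vertices (or even just of parents of $L$, which can number up to $|L|\approx Kd$ with $d=n^{1-(1+\eps)/k}$) is at least $\binom{d}{k}\approx n^{k-1-\eps}/k!$, so the union bound is off by a factor of roughly $n^{k-1-2\eps}$. There is no way to repair this for arbitrary $k$-tuples; the $n^{-\eps}$ slack simply cannot absorb exponentially many choices. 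The paper sidesteps this entirely by never demanding pseudorandomness of the whole embedding. Instead, it first applies the bare-paths/leaves dichotomy (Lemma~\ref{lemma:paths-leaf}) and then dichotomizes on whether $T$ has fewer than $k$ parents of high leaf degree. In the hard case it \emph{pre-selects} a single good target set --- a dominating clique $K$ of size $k$ in the high range (Claim~\ref{claim1}), or a random set $R_1$ of size $(1-\mu)k$ with a robust domination property in the low range --- and then \emph{steers} the greedy embedding so that the $\approx k$ heavy parents land precisely (or almost precisely) in this one set. Hall's condition then only needs to be verified against that single tuple and its subsets, not against all $k$-tuples of embedded vertices, which is why the union bound closes. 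Your ``gadget matching'' idea for the extremal caterpillars is heading in the wrong direction: the issue is not that the reservoir needs to be made of stars rather than leaves, but that the heavy parents must be placed into a single precomputed dominating-type set with extra structure (clique, or $\eps k'/10$-robust domination) so that the Hall argument degenerates to checking one set and its near-complete subsets. Without that targeting mechanism --- the auxiliary steering via the clique $K$ or the sets $R_1,R_2$ --- the plan stalls exactly where you predict it will.
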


Note that when $\Delta=Cn/\log n$, $k$ is roughly $\log n/C$, and the minimum degree condition is around $\delta(G)\geq n-n/e^C$, which is linear. Thus, for $\Delta$ in the lower range, our result asymptotically matches the form of Koml\'os, S\'ark\"ozy, and Szemer\'edi's result in~\cite{KSS2}. 



By a standard application of the Chernoff Bound, we immediately obtain a corresponding random graph version of this result. This deals with a different regime compared to Montgomery's seminal result~{\cite[Theorem 1.1]{M}} in 2019, which shows that for every constant $\Delta$, there exists a constant $C$, such that with high probability $G(n,C\log n/n)$ contains every tree in $\cT(n,\Delta)$. 
\begin{corollary}\label{cor:random}
For any $\eps>0$, there exists $C>1$, such that the following is true. Let $\Delta\geq Cn/\log n$, and suppose $k=\ceil{(n-1)/\Delta}-1\geq2$. Then, with high probability $G(n,1-n^{-(1+2\eps)/k})$ contains every tree in $\cT(n,\Delta)$. 
\end{corollary}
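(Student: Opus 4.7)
The natural plan is to deduce Corollary 1.2 directly from Theorem 1.1 by showing, via a Chernoff-and-union-bound argument, that $G\sim G(n,p)$ with $p=1-n^{-(1+2\eps)/k}$ almost surely satisfies the minimum-degree hypothesis $\delta(G)\geq n-n^{1-(1+\eps)/k}$ of Theorem 1.1 applied with the same $\eps$. Once this is established, Theorem 1.1 finishes the proof deterministically for every such $G$, and no further work on the embedding side is needed.

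To carry this out I would fix a vertex $v$ and let $X_v$ denote its number of non-neighbors, so that $X_v\sim\mathrm{Bin}\bigl(n-1,\,n^{-(1+2\eps)/k}\bigr)$ has mean $\mu\leq n^{1-(1+2\eps)/k}$. The target bound is $a:=n^{1-(1+\eps)/k}$, which exceeds $\mu$ by the factor $n^{\eps/k}$. The hypotheses $\Delta\geq Cn/\log n$ and $k\geq2$ confine $k$ to the range $[2,(\log n)/C]$, so $n^{\eps/k}$ lies between the constant $e^{C\eps}>1$ (when $k$ is at the upper end) and $n^{\eps/2}$ (when $k=2$). The gap of $\eps/k$ in the exponent, manufactured by using $1+2\eps$ rather than $1+\eps$ in the definition of $p$, is precisely what provides the slack for the Chernoff tail.

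I would then apply the multiplicative Chernoff bound with a two-case analysis. When $n^{\eps/k}$ is at least a sufficient constant (say $\geq 2$), which is what happens for $k$ near the lower end of its range, the Poisson-tail form $\Pr[X_v\geq a]\leq (e\mu/a)^{a}\leq e^{-\Omega(a)}$ applies; since $k\geq2$ gives $a\geq n^{(1-\eps)/2}$, this is $e^{-n^{\Omega(1)}}$. When $n^{\eps/k}$ is only a constant $>1$, which forces $k=\Theta(\log n)$ and hence $\mu=\Theta(n)$, the standard form $\Pr[X_v\geq(1+\delta)\mu]\leq e^{-\Omega(\delta^{2}\mu)}$ with $\delta=\Omega(1)$ bounded away from $0$ yields $e^{-\Omega(n)}$. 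Either way the per-vertex failure probability is superpolynomially small, so a union bound over the $n$ vertices still leaves the overall failure probability $o(1)$.

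I do not expect any genuine obstacle here; the corollary is signposted in the paper as a standard Chernoff consequence of Theorem 1.1. The only care required is uniform treatment of both extremes of $k\in[2,(\log n)/C]$, which is handled cleanly by the case split on $n^{\eps/k}$ above; the substantive content of the corollary is all carried by Theorem 1.1 itself.
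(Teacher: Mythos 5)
Your proposal is correct and takes essentially the same route as the paper: verify via Chernoff plus a union bound that $G(n,1-n^{-(1+2\eps)/k})$ w.h.p.\ satisfies $\delta(G)\geq n-n^{1-(1+\eps)/k}$, then invoke Theorem~\ref{thm:main}. The paper avoids your two-case split by noting that for $C$ large the threshold $n^{1-(1+\eps)/k}=n^{\eps/k}\cdot n^{1-(1+2\eps)/k}$ always exceeds $2\mathbb{E}[X_v]$ (since $n^{\eps/k}\geq e^{\eps C}$ uniformly over $k\leq\log n/C$), so a single application of Lemma~\ref{lemma:chernoff1} with deviation factor $1$ gives $\mathbb{P}(X_v\geq n^{1-(1+\eps)/k})\leq 2\exp(-n^{1-(1+2\eps)/k}/6)=o(n^{-1})$ directly.
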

\begin{proof}
For every $v\in G(n,1-n^{-(1+2\eps)/k})$, let $X_v$ be the number of vertices not adjacent to $v$. Then, $\mathbb{E}[X_v]=1+(n-1)n^{-(1+2\eps)/k}$, so by Lemma~\ref{lemma:chernoff1}, for sufficiently large $C$,
\[\mathbb{P}(X_v\geq n^{1-(1+\eps)/k})\leq\mathbb{P}(X_v\geq2\mathbb{E}[X_v])\leq2\exp(-\mathbb{E}[X_v]/3)\leq2\exp(-n^{1-(1+2\eps)/k}/6)\leq o(n^{-1}).\]
Therefore, by a union bound, with high probability $G(n,1-n^{-(1+2\eps)/k})$ has minimum degree at least $n-n^{1-(1+\eps)/k}$, and thus contains every $T\in\cT(n,\Delta)$ by Theorem~\ref{thm:main}.
\end{proof}

When $n/\Delta=O_\eps(1)$ and $n-\Delta\ceil{(n-1)/\Delta}$ is not ``too small", the choice of $k$ in the two results above can be improved by 1 to $\ceil{(n-1)/\Delta}$. By being more careful in our proof, it is possible to make this condition more precise, but we did not pursue this direction in order to present more unified versions of our results that work for all $n$ and $\Delta$. 

Aside from this, both of these results are asymptotically best possible, as shown by the following random graph construction analogous to the counterexample of Koml\'os, S\'ark\"ozy, and Szemer\'edi in~\cite{KSS2}. Recall that a set $K\subset V(G)$ is dominating if $N(K)=V(G)\setminus K$. Essentially, for every $\Delta\gg n/\log n$, the main obstruction to containing all trees in $\cT(n,\Delta)$ is the lack of a dominating set of size around $n/\Delta$, and $p=1-n^{-\Delta/n}$ is around when we can expect $G(n,p)$ to contain such a dominating set.


\begin{theorem}\label{thm:random}
For every $\eps>0$, let $n/\log n\leq\Delta\leq n-1$, and let $k=\ceil{(n-1)/\Delta}$. Then, there exists $T\in\cT(n,\Delta)$, such that with high probability $G(n,1-n^{-(1-\eps)/k})$ does not contain $T$.
\end{theorem}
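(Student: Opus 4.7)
The plan is to construct a specific spanning tree $T\in\cT(n,\Delta)$ that admits a dominating set of size $k$, and then show via a first moment calculation that $G(n,1-n^{-(1-\eps)/k})$ with high probability contains no such dominating set and hence no copy of $T$. The key reduction is that any embedding $\phi:V(T)\to V(G)$ of a spanning tree sends a dominating set $D\subseteq V(T)$ to a dominating set $\phi(D)\subseteq V(G)$: each $u\in V(G)$ equals $\phi(v)$ for some $v\in V(T)$, and $v$ is either in $D$ or $T$-adjacent to some $v'\in D$, in which case $\phi(v')$ is $G$-adjacent to $u$.

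For the construction, I would use a ``subdivided caterpillar''. Pick hub vertices $u_1,\dots,u_k$, insert a single subdivision vertex $w_i$ between each consecutive pair $u_iu_{i+1}$ (so $k-1$ subdivisions in total), and attach the remaining $n-2k+1$ vertices as leaves to the hubs, keeping each hub's total degree at most $\Delta$. The total leaf capacity is $k(\Delta-2)+2$, which is at least $n-2k+1$ precisely when $k\Delta\ge n-1$, as guaranteed by $k=\ceil{(n-1)/\Delta}$. Then $T\in\cT(n,\Delta)$, and $\{u_1,\dots,u_k\}$ dominates $T$ because every leaf is adjacent to a hub and every subdivision $w_i$ is adjacent to both $u_i$ and $u_{i+1}$.

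For the random graph step, write $q=n^{-(1-\eps)/k}$ and fix a $k$-subset $S\subseteq[n]$. A vertex $v\notin S$ fails to have a neighbor in $S$ with probability $q^k=n^{-(1-\eps)}$, independently across $v$, so $S$ is dominating with probability $(1-q^k)^{n-k}\le\exp(-n^{\eps}/2)$ for $n$ large. The expected number of dominating $k$-sets is at most $n^k\exp(-n^{\eps}/2)$, which equals $\exp(O(\log^2 n)-n^{\eps}/2)=o(1)$ since $k\le\log n+1$ under the hypothesis $\Delta\ge n/\log n$. Markov, together with the fact that every superset of a dominating set is dominating, then rules out any dominating set of size at most $k$ w.h.p., so w.h.p.\ $G(n,1-q)$ does not contain $T$.

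The main obstacle worth flagging is the choice of $T$. A plain caterpillar or spider has its non-leaves as its unique minimum dominating set, and any tree in $\cT(n,\Delta)$ must have at least $\ceil{(n-2)/(\Delta-1)}$ non-leaves; for small $k$ and moderate $\Delta$ this strictly exceeds $k$, which would make the first moment bound too weak. Inserting one subdivision vertex between each pair of consecutive hubs inflates $|V(T)|$ without growing the minimum dominating set, precisely reconciling the two counts and matching the threshold $q=n^{-(1-\eps)/k}$.
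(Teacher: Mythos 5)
Your proof is correct and takes essentially the same approach as the paper: construct a tree $T\in\cT(n,\Delta)$ with a dominating set of size exactly $k$, then kill all dominating $k$-sets of $G(n,1-n^{-(1-\eps)/k})$ by a first-moment bound via $(1-q^k)^{n-k}\le\exp(-n^{\eps}/2)$ and $\binom{n}{k}\le\exp(O(\log^2 n))$. The only difference is the concrete tree: the paper uses a two-level broom (a central vertex adjacent to $k$ hubs $x_1,\ldots,x_k$, each hub carrying up to $\Delta-1$ leaves, with $\{x_1,\ldots,x_k\}$ dominating since the centre and all leaves are adjacent to it) rather than your subdivided caterpillar, but both constructions serve the identical purpose of keeping the minimum dominating set down to size $k$ despite the tree having more than $k$ non-leaves.
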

\begin{proof}
Let $T\in\cT(n,\Delta)$ be a tree that contains a vertex that is adjacent to a set $X$ of $k$ other vertices, such that every other vertex in $T$ is a leaf that is adjacent to a vertex in $X$, and every $x\in X$ has degree at most $\Delta$. This is possible as $k=\ceil{(n-1)/\Delta}$. Moreover, note that $X$ is a dominating set of size $k$ in $T$.

We claim that if $G\sim G(n,1-n^{-(1-\eps)/k})$, then with high probability $G$ does not contain a dominating set $K$ of size $k$, and so with high probability $G(n,1-n^{-(1-\eps)/k})$ does not contain $T$. 
Indeed, for any fixed $K\subset V(G)$, the probability that $K$ is a dominating set in $G$ is at most
\[\left(1-\left(n^{-\frac{1-\eps}k}\right)^k\right)^{n-k}\leq\left(1-n^{-1+\eps}\right)^{n/2}\leq \exp\left(-n^{\eps}/2\right).\]
Therefore, by a union bound, the probability that such a $K$ exists is at most
\[\binom{n}{k}\exp(-n^{\eps}/2)\leq\exp\left(k\log n-n^\eps/2\right)\leq\exp\left(2(\log n)^2-n^\eps/2\right)=o(1),\]
where we used that $k\leq2n/\Delta\leq2\log n$.
\end{proof}


\begin{corollary}\label{cor:notcontain}
For every $\eps>0$, let $n/\log n\leq\Delta\leq n-1$, and let $k=\ceil{(n-1)/\Delta}$. Then, there exists $T\in\cT(n,\Delta)$ and an $n$-vertex graph $G$ with $\delta(G)\geq n-n^{1-(1-2\eps)/k}$, such that $G$ does not contain $T$.
\end{corollary}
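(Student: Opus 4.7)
The plan is to strengthen Theorem~\ref{thm:random} by showing that the random graph used there typically already has the required minimum degree. Sample $G\sim G(n,1-n^{-(1-\eps)/k})$; by Theorem~\ref{thm:random}, with probability $1-o(1)$ the sampled graph fails to contain the specific tree $T$ constructed in the proof. If I additionally show that with probability $1-o(1)$ one has $\delta(G)\geq n-n^{1-(1-2\eps)/k}$, then a union bound produces a deterministic $G$ with both properties, as required.

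For the minimum degree bound, I would mirror the Chernoff computation in the proof of Corollary~\ref{cor:random}, but in the ``opposite direction''. For each vertex $v$, let $X_v$ count the non-neighbors of $v$, so $\mu:=\mathbb{E}[X_v]=(n-1)n^{-(1-\eps)/k}$. A direct calculation gives
\[n^{1-(1-2\eps)/k}=(1+\delta)\mu \quad\text{with}\quad 1+\delta=(1+o(1))\,n^{\eps/k},\]
and I apply a standard Chernoff tail bound to $\mathbb{P}(X_v\geq(1+\delta)\mu)$ and then union bound over the $n$ vertices.

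The only numerical thing to verify is that $\delta^2\mu\gg\log n$, so that the resulting tail bound is $o(1/n)$ per vertex. Since $\Delta\geq n/\log n$ forces $k\leq\log n+1$, the quantity $n^{\eps/k}$ stays bounded below by a constant strictly greater than $1$ depending only on $\eps$, so $\delta$ is bounded below by a positive constant $\delta_0=\delta_0(\eps)$. Meanwhile $(1-\eps)/k\leq 1-\eps$ yields $\mu\geq n^\eps/2$, hence $\delta^2\mu=\Omega_\eps(n^\eps)$, which comfortably dominates $\log n$. The one subtlety compared to Corollary~\ref{cor:random} is that one cannot apply the cruder ``$X_v\geq 2\mu$'' form of Chernoff, since $\delta$ can be small when $k$ is close to $\log n$; instead one must use the general $(1+\delta)$-form and exploit the fact that $\mu$ is polynomially large in $n$. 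I do not anticipate any substantive obstacle beyond this bookkeeping, since the statement is essentially a deterministic repackaging of Theorem~\ref{thm:random} with an added minimum degree guarantee, and the slack from $\eps$ to $2\eps$ in the exponent is precisely what absorbs the Chernoff deviation.
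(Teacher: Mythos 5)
Your proposal is correct and follows essentially the same approach as the paper: take the tree $T$ from the proof of Theorem~\ref{thm:random}, show that $G(n,1-n^{-(1-\eps)/k})$ simultaneously satisfies the minimum degree bound whp via a Chernoff estimate on the non-neighbourhoods, and conclude by a union bound that some realisation has both properties. Your remark about needing the $(1+\delta)$-form rather than the factor-$2$ form of Chernoff is a fair point of care (since here $k$ can be as large as $\log n$, forcing $n^{\eps/k}\geq e^{\eps}$, which is below $2$ for small $\eps$), whereas the paper just cites ``as in Corollary~\ref{cor:random}''; but since $\delta$ is still bounded below by a constant depending on $\eps$ and $\mu\geq n^{\eps}/2\gg\log n$, both routes go through and the difference is only in bookkeeping.
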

\begin{proof}
Let $T$ be as in the proof of Theorem~\ref{thm:random}, so that with high probability, $G(n,1-n^{-(1-\eps)/k})$ does not contain $T$. Using Lemma~\ref{lemma:chernoff1} and as in the proof of Corollary~\ref{cor:random}, with high probability $G(n,1-n^{-(1-\eps)/k})$ has minimum degree $n-n^{1-(1-2\eps)/k}$. Thus, we can find a realisation of $G\sim G(n,1-n^{-(1-\eps)/k})$, such that $\delta(G)\geq n-n^{1-(1-2\eps)/k}$ and $G$ contains no copy of $T$. 
\end{proof}

The rest of the paper is organised as follows. In Section~\ref{sec:prelim}, we first gather several useful preliminary results that will be used later in the main proofs, then provide a brief sketch of the proof of Theorem~\ref{thm:main}. In Section~\ref{sec:main}, we prove Theorem~\ref{thm:main}, by separately proving it in two overlapping ranges $\Delta\geq2n\log\log n/\log n$ (Theorem~\ref{thm:highregime}) and  $Cn/\log n\leq\Delta\ll n$ (Theorem~\ref{thm:lowregime}), in Section~\ref{sec:main1} and Section~\ref{sec:main2}, respectively.

\section{Preliminaries}\label{sec:prelim}
In this section, we first collect several useful preliminary results in Section~\ref{sec:useful}, then outline the proof of Theorem~\ref{thm:main} in Section~\ref{sec:outline}. 
\subsection{Useful results}\label{sec:useful}
We begin by recalling the well-known Azuma's Inequality and Chernoff Bound.
\begin{lemma}[Azuma's Inequality~{\cite[Lemma 4.2]{W}}]\label{lemma:azuma}
Let $X_1,\ldots, X_n$ be a sequence of random variables, such that for each $i\in[n]$, there exist constants $a_i\in\mathbb{R}$ and $c_i>0$ with $|X_i-a_i|\leq c_i$.
\begin{itemize}
    \item If $\mathbb{E}[X_i\mid X_1,\ldots,X_{i-1}]\geq a_i$ for every $i\in[n]$, then for every $t>0$, \[\textstyle\mathbb{P}\left(\sum_{i=1}^n(X_i-a_i)\leq-t\right)\leq\exp\left(-\frac{t^2}{2\sum_{i=1}^nc_i^2}\right).\]
    \item If $\mathbb{E}[X_i\mid X_1,\ldots,X_{i-1}]\leq a_i$ for every $i\in[n]$, then for every $t>0$, \[\textstyle \mathbb{P}\left(\sum_{i=1}^n(X_i-a_i)\geq t\right)\leq\exp\left(-\frac{t^2}{2\sum_{i=1}^nc_i^2}\right).\]
\end{itemize}
\end{lemma}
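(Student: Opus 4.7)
The plan is to prove only the first inequality, since applying it to the random variables $-X_i$ with targets $-a_i$ and bounds $c_i$ immediately yields the second. Setting $Y_i = X_i - a_i$ I get $|Y_i|\leq c_i$ and $\mathbb{E}[Y_i\mid X_1,\ldots,X_{i-1}]\geq 0$, and I would then run the standard Chernoff-type argument: for any $\lambda>0$, Markov's inequality gives
\[\mathbb{P}\!\left(\sum_{i=1}^n Y_i\leq -t\right)\;\leq\; e^{-\lambda t}\,\mathbb{E}\!\left[e^{-\lambda\sum_{i=1}^n Y_i}\right],\]
so the entire argument reduces to bounding the exponential moment on the right.

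The key step will be a one-step Hoeffding-type bound: for any random variable $Y$ and sub-$\sigma$-algebra $\mathcal{F}$ with $|Y|\leq c$ and $\mathbb{E}[Y\mid\mathcal{F}]\geq 0$, and for every $\lambda>0$,
\[\mathbb{E}\!\left[e^{-\lambda Y}\mid\mathcal{F}\right]\;\leq\; e^{\lambda^2 c^2/2}.\]
I would prove this by using the convexity of $y\mapsto e^{-\lambda y}$ to write $e^{-\lambda y}\leq \tfrac{c+y}{2c}e^{-\lambda c}+\tfrac{c-y}{2c}e^{\lambda c}$ for $y\in[-c,c]$, take the conditional expectation, and note that since $e^{\lambda c}>e^{-\lambda c}$, the resulting affine function of $\mathbb{E}[Y\mid\mathcal{F}]$ is decreasing in that conditional mean. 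Hence replacing $\mathbb{E}[Y\mid\mathcal{F}]$ by $0$ can only enlarge the bound, giving $\cosh(\lambda c)\leq e^{\lambda^2 c^2/2}$ after a term-by-term comparison of Taylor series.

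Iterating this bound via the tower property yields $\mathbb{E}[e^{-\lambda\sum Y_i}]\leq\exp\!\left(\lambda^2\sum_{i=1}^n c_i^2/2\right)$; combining with Markov and optimising at $\lambda = t/\sum_{i=1}^n c_i^2$ recovers the claimed $\exp(-t^2/(2\sum_{i=1}^n c_i^2))$. This is entirely standard material and in practice I would simply cite a textbook. The only mildly nontrivial point is checking that the one-sided hypothesis $\mathbb{E}[Y\mid\mathcal{F}]\geq 0$ (rather than $=0$) still yields the symmetric-looking Hoeffding factor, which, as noted, is immediate from monotonicity of the linear upper bound in the conditional mean; no real obstacle is anticipated.
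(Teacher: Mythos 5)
The paper does not prove this lemma; it is stated as a citation to Williams's textbook (\cite[Lemma 4.2]{W}) and used as a black box. So there is no paper proof to compare against. Your argument is a correct, self-contained derivation via the standard Chernoff--Hoeffding--Azuma route: reduce to one tail, bound the conditional exponential moment of each increment by a Hoeffding-type inequality obtained from convexity of $y\mapsto e^{-\lambda y}$, chain these bounds via the tower property, then apply Markov and optimise $\lambda$. The one point that genuinely requires care --- that the hypothesis is only $\mathbb{E}[Y_i\mid\mathcal{F}_{i-1}]\geq 0$, not $=0$ --- you handle correctly: the affine upper bound $\frac{c+y}{2c}e^{-\lambda c}+\frac{c-y}{2c}e^{\lambda c}$ is, after conditioning, a decreasing affine function of the conditional mean (coefficient $\frac{1}{2c}(e^{-\lambda c}-e^{\lambda c})<0$ for $\lambda>0$), so pushing the mean down to $0$ only enlarges the bound and leaves $\cosh(\lambda c)\leq e^{\lambda^2 c^2/2}$. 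The second tail then follows by the symmetric replacement $X_i\mapsto -X_i$, $a_i\mapsto -a_i$, as you say. This is a valid proof of the cited statement; the paper simply outsources it to a reference.
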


\begin{lemma}[Chernoff Bound~{\cite[Corollary 2.3, Theorem 2.10]{JLR}}]\label{lemma:chernoff1}
Let $X$ be either a binomial random variable or a hypergeometric random variable. Then, for every $0<\eps\le 3/2$,
\[\mathbb{P}\left(|X-\mathbb E[X]|\ge \eps\mathbb E[X]\right)\le 2\exp(-\eps^2\mathbb{E}[X]/3).\]
\end{lemma}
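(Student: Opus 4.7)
The plan is to prove both cases via the Cramér–Chernoff exponential moment method, establishing the bound for the binomial first and transferring it to the hypergeometric through a classical domination argument.

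For $X\sim\mathrm{Bin}(n,p)$, I would write $X=\sum_{i=1}^n X_i$ with $X_i$ independent Bernoulli($p$) and $\mu=\mathbb{E}[X]=np$. For $t>0$, Markov applied to $e^{tX}$, combined with the pointwise inequality $1+x\leq e^x$ and independence of the $X_i$, gives $\mathbb{E}[e^{tX}]=(1-p+pe^t)^n\leq\exp(\mu(e^t-1))$, so
$$\mathbb{P}(X\geq(1+\eps)\mu)\leq\exp\bigl(\mu(e^t-1)-t(1+\eps)\mu\bigr).$$
Choosing $t=\log(1+\eps)$ and invoking the elementary calculus bound $(1+\eps)\log(1+\eps)-\eps\geq\eps^2/3$ for $0<\eps\leq 3/2$ yields the upper tail $\exp(-\eps^2\mu/3)$. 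The lower tail follows symmetrically by applying the method with $t<0$ and the inequality $(1-\eps)\log(1-\eps)+\eps\geq\eps^2/2$ for $0<\eps\leq 1$; note that $\mathbb{P}(X\leq(1-\eps)\mu)=0$ when $\eps>1$, so the lower-tail bound is automatic in that range. A union over the two tails produces the factor of $2$ in the stated inequality.

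For $X$ hypergeometric, the Bernoulli summands are no longer independent, so I would appeal to Hoeffding's classical comparison: if $X$ counts successes in $n$ draws without replacement from a population of proportion $p$ and $Y\sim\mathrm{Bin}(n,p)$, then $\mathbb{E}[\varphi(X)]\leq\mathbb{E}[\varphi(Y)]$ for every convex $\varphi$. Taking $\varphi(x)=e^{tx}$ shows that the hypergeometric MGF is dominated by that of the binomial with matching mean, and the binomial optimisation from the previous paragraph transports verbatim.

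I expect the main obstacle to be establishing the MGF domination in the hypergeometric case, since one cannot simply compute with independence. The cleanest routes are either a swap-based martingale argument that exchanges adjacent draws one at a time, or Hoeffding's original exchangeability argument, which represents $X$ as a sum of exchangeable Bernoullis and applies a convexity inequality term by term. Either route yields the required MGF inequality $\mathbb{E}[e^{tX}]\leq\mathbb{E}[e^{tY}]$ for all $t\in\mathbb{R}$, after which the rest of the proof is an immediate transfer of the binomial optimisation.
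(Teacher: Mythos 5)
The paper does not prove this lemma but cites it directly from Janson--{\L}uczak--Ruci\'nski; your proof is a correct, self-contained reconstruction that follows exactly the route of that reference (MGF optimisation with $t=\log(1+\eps)$, the calculus bound $\varphi(\eps)=(1+\eps)\log(1+\eps)-\eps\geq\eps^2/(2+2\eps/3)\geq\eps^2/3$ for $\eps\leq 3/2$, and Hoeffding's convex-ordering reduction for the hypergeometric case). Everything checks out, including the observation that the lower tail is vacuous for $\eps>1$ since $X\geq0$.
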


We also use another version of the Chernoff Bound. Let $D(x\parallel y)$ denote the quantity $x\log(x/y)+(1-x)\log((1-x)/(1-y))$. In this paper, we only use this notation as a shorthand. However, we remark that more generally, given two probability distributions $P$ and $Q$, $D(P\parallel Q)$ denotes the Kullback--Leibler divergence, which is a measure of how different $Q$ is from $P$. In particular, the quantity $D(x\parallel y)$ that we are interested in here is the Kullback--Leibler divergence between two Bernoulli random variables with parameters $x$ and $y$, respectively.
\begin{lemma}[Chernoff Bound]\label{lemma:chernoff2}
Let $X$ be either the binomial random variable $\textup{Bin}(n,q)$ or the hypergeometric random variable $\textup{HG}(N,K,n)$ with $K/N=q$. Then, for every $0<\lambda<1-q$,
\[\mathbb{P}(X\geq(q+\lambda)n)\leq\exp\left(-nD(q+\lambda\parallel q)\right).\]
In particular, \[\mathbb{P}(X\geq(q+\lambda)n)\leq(2q^\lambda)^n.\]
\end{lemma}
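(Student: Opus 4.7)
The first inequality is the standard entropy form of the Chernoff--Hoeffding bound, which I would prove via the exponential moment method. For $X\sim\mathrm{Bin}(n,q)$ and $t>0$, Markov's inequality applied to $e^{tX}$ gives
\[\mathbb{P}(X\geq(q+\lambda)n)\leq\frac{\mathbb{E}[e^{tX}]}{e^{t(q+\lambda)n}}=\left(\frac{1-q+qe^t}{e^{t(q+\lambda)}}\right)^n.\]
Optimising in $t$ (the critical point is $e^t=\tfrac{(q+\lambda)(1-q)}{q(1-q-\lambda)}$) turns the right-hand side into exactly $\exp(-nD(q+\lambda\parallel q))$. For the hypergeometric random variable, the same bound follows from Hoeffding's classical observation that $\mathbb{E}[e^{tX}]$ for $\mathrm{HG}(N,K,n)$ is dominated by the corresponding binomial moment generating function, so the computation above is unchanged; at this point the first inequality is already contained in the cited reference \cite{JLR} and I would simply invoke it.

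For the second (``in particular'') inequality, it is enough to show $D(q+\lambda\parallel q)\geq\lambda\log(1/q)-\log 2$, after which exponentiation yields $\exp(-nD(q+\lambda\parallel q))\leq(2q^\lambda)^n$. Setting $p=q+\lambda$ and rearranging,
\[D(p\parallel q)=\bigl[p\log p+(1-p)\log(1-p)\bigr]+p\log\tfrac{1}{q}+(1-p)\log\tfrac{1}{1-q}.\]
The bracketed term is the negative binary entropy and hence at least $-\log 2$; the last summand is nonnegative since $\log\tfrac{1}{1-q}\geq 0$; and $p\log(1/q)=(q+\lambda)\log(1/q)\geq\lambda\log(1/q)$ because $q\log(1/q)\geq 0$ for $q\in(0,1)$. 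Adding these three estimates yields the desired lower bound on $D(p\parallel q)$.

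The heavy lifting is all classical: the exponential-moment proof is textbook, and the only ingredient whose derivation lies outside it is the Hoeffding domination in the hypergeometric case, which I would cite rather than reprove. The only mild point of care is in the second part, where one must arrange the expansion so that each leftover summand has a definite sign on the full range $q\in(0,1-\lambda)$ and $\lambda\in(0,1-q)$; but since we are only asking for the cruder $-\log 2$ slack rather than the sharp constant, the algebra stays short and there is no genuine obstacle.
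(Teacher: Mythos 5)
Your proof is correct. For the first inequality both you and the paper defer to references (the paper cites Hoeffding's original theorems) rather than carrying out the exponential-moment calculation, so that part is aligned. For the second inequality, the target is the same in both arguments, namely the lower bound $D(q+\lambda\parallel q)\geq\lambda\log(1/q)-\log 2$, but the algebra is organized differently. The paper checks that $f(t)=(t-\lambda)\log((t-\lambda)/t)$ is decreasing on $(\lambda,1]$, uses that to lower-bound the two summands of $D$, and arrives at the intermediate bound $\lambda\log(1/q)+\lambda\log\lambda+(1-\lambda)\log(1-\lambda)$, i.e.\ it absorbs a $-\log 2$ from the negative binary entropy evaluated at $\lambda$. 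You instead rewrite $D(p\parallel q)$ with $p=q+\lambda$ as $\bigl[p\log p+(1-p)\log(1-p)\bigr]+p\log(1/q)+(1-p)\log\tfrac{1}{1-q}$, bound the first bracket by $-\log 2$ (negative binary entropy at $p$), discard the nonnegative third term, and use $q\log(1/q)\geq 0$ on the second. Your decomposition is a little cleaner in that it avoids the small calculus lemma about the monotonicity of $f$; both routes are short and give the same conclusion, so the choice is mostly a matter of taste.
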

\begin{proof}
The first part of the lemma follows directly from~{\cite[Theorem 1, Theorem 4]{Ho}}. Now we prove the second part. It is easy to check that the function $f(t)=(t-\lambda)\log((t-\lambda)/t)$ is decreasing on $(\lambda,1]$. Therefore, from definition, 
\begin{align*}
D(q+\lambda\parallel q)&=(q+\lambda)\log((q+\lambda)/q)+(1-q-\lambda)\log((1-q-\lambda)/(1-q))\\
&\geq\lambda\log(1/q)+\lambda\log\lambda+(1-\lambda)\log(1-\lambda)\geq\lambda\log(1/q)-\log2,
\end{align*}
from which it follows that $\mathbb{P}(X\geq(q+\lambda)n)\leq\exp\left(-n\lambda\log(1/q)+n\log2\right)=(2q^{\lambda})^n$.
\end{proof}

In many of our later tree embedding arguments, we will first embed all but a small set of vertices with degrees 1 or 2 in $T$. To embed these low degree vertices at the end, we use a Hall type matching argument via the following well-known Hall's Theorem and its generalisation. The conditions in Lemma~\ref{lemma:Hall} and Lemma~\ref{lemma:hallmatching} will both be referred to as Hall's condition.
\begin{lemma}[Hall's Theorem {\cite[Theorem 1]{H}}]\label{lemma:Hall}
Let $G$ be a bipartite graph with bipartition classes $A$ and $B$. If $|N(S)|\ge |S|$ for every $S\subset A$, then $G$ contains a matching covering all vertices in $A$.
\end{lemma}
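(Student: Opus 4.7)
The plan is to prove Hall's Theorem by strong induction on $|A|$. The base case $|A|=1$ is immediate, since the assumption $|N(\{a\})|\geq 1$ applied to the unique vertex $a\in A$ yields an edge from $a$ into $B$, which by itself is a matching covering $A$. For the inductive step, I would split into two cases based on how tightly Hall's condition is satisfied.

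In the first case, suppose every nonempty proper subset $S\subsetneq A$ satisfies the strict inequality $|N(S)|\geq|S|+1$. Then pick an arbitrary $a\in A$ together with any $b\in N(\{a\})$, match them, and delete both vertices from $G$. For every $S'\subseteq A\setminus\{a\}$, its neighbourhood in the reduced graph is $N(S')\setminus\{b\}$, which has size at least $|N(S')|-1\geq|S'|$, so Hall's condition survives in the smaller bipartite graph and the inductive hypothesis produces a matching covering $A\setminus\{a\}$. Adjoining the edge $ab$ completes the matching.

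In the second case, there is a nonempty proper $S\subsetneq A$ with $|N(S)|=|S|$. The idea is to apply induction separately to the bipartite graph on $(S,N(S))$ and to the bipartite graph on $(A\setminus S,B\setminus N(S))$, then glue the two matchings (which are automatically vertex-disjoint) together. Hall's condition for $(S,N(S))$ is inherited directly from the original graph. The main obstacle, and the only step that genuinely uses the tightness of $S$, is verifying Hall's condition for the second sub-problem: for any $S'\subseteq A\setminus S$, applying the original Hall's condition to $S\cup S'$ gives $|N(S\cup S')|\geq|S|+|S'|$, and since the neighbourhood of $S'$ inside $B\setminus N(S)$ contains $N(S\cup S')\setminus N(S)$, its size is at least $|N(S\cup S')|-|N(S)|\geq|S'|$. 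Because $S$ is a \emph{proper} nonempty subset, both sub-problems have left-sides of size strictly less than $|A|$, so induction applies, and merging the two matchings produces one that covers all of $A$.
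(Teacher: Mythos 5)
Your proof is correct. Note that the paper does not actually prove this lemma: it is stated as a cited result (Hall's original theorem, reference [H]), so there is no in-paper argument to compare against. What you have written is the classical induction proof usually attributed to Halmos and Vaughan. The case split is exactly right: when every nonempty proper $S\subsetneq A$ has slack $|N(S)|\ge|S|+1$, deleting a matched pair $a,b$ preserves Hall's condition because removing the single vertex $b$ from $N(S')$ costs at most one; and when some nonempty proper $S$ is tight with $|N(S)|=|S|$, you correctly verify Hall's condition on $(A\setminus S,\,B\setminus N(S))$ via $|N(S\cup S')|\ge|S|+|S'|$ and the identity $N(S\cup S')\setminus N(S)\subseteq N(S')\cap(B\setminus N(S))$. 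Both sub-instances are strictly smaller, and the two resulting matchings are vertex-disjoint by construction. One very minor point worth making explicit in a write-up: in the first case the argument only invokes the strict inequality for \emph{nonempty} $S'\subseteq A\setminus\{a\}$, since such $S'$ are automatically proper in $A$; the empty set is trivial. As written the proof is complete and correct.
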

\begin{lemma}[{\cite[Corollary 11]{B}}]\label{lemma:hallmatching}
Let $G$ be a bipartite graph with bipartition classes $A$ and $B$, and let $(f_a)_{a\in A}$ be a tuple of non-negative integers indexed by elements of $A$. Suppose that $|N(S)|\geq\sum_{a\in S}f_a$ for every $S\subset A$. Then, there exists a collection of vertex-disjoint stars $(S_a)_{a\in A}$ in $G$, such that for each $a\in A$, $S_a$ is centred at $a$ and has exactly $f_a$ leaves.
\end{lemma}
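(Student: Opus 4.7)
The plan is to reduce the statement to classical Hall's theorem (Lemma~\ref{lemma:Hall}) via a standard blow-up construction. First, I would replace each $a \in A$ with $f_a$ identical copies: construct an auxiliary bipartite graph $G'$ with vertex classes $A' = \{(a,i) : a \in A,\ 1 \leq i \leq f_a\}$ and $B$, where $(a,i)$ is adjacent to $b$ in $G'$ exactly when $ab \in E(G)$. Vertices $a \in A$ with $f_a = 0$ simply contribute no copies, and will later be assigned the trivial star consisting of $a$ alone.

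Second, I would verify that $G'$ satisfies Hall's condition. For any $S' \subseteq A'$, let $S = \{a \in A : (a,i) \in S' \text{ for some } i\}$ be its projection onto $A$; by construction $N_{G'}(S') = N_G(S)$, so the hypothesis of the lemma gives
\[
|N_{G'}(S')| = |N_G(S)| \;\geq\; \sum_{a \in S} f_a \;\geq\; |S'|.
\]
Applying Lemma~\ref{lemma:Hall} then produces a matching $M$ in $G'$ that saturates $A'$.

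Finally, I would unpack $M$ into the desired stars: for each $a \in A$, let $S_a$ be the star with centre $a$ and leaf set $\{b \in B : (a,i)\,b \in M \text{ for some } i \in [f_a]\}$. By construction, $S_a$ has exactly $f_a$ leaves (one per copy of $a$), each leaf lies in $N_G(a)$, and distinct stars have disjoint leaf sets because $M$ is a matching; since centres lie in $A$ and leaves in $B$, the stars are automatically vertex-disjoint in $G$. I do not anticipate any real obstacle here — the whole argument is a textbook reduction — the only mild care needed is handling vertices with $f_a = 0$, which the blow-up absorbs automatically.
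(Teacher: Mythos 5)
The paper does not prove this lemma; it simply cites it as \cite[Corollary 11]{B}, so there is no proof in the paper to compare against. Assessed on its own terms, your blow-up reduction is correct and is indeed the standard way to derive this generalisation from Hall's theorem: the key chain $|N_{G'}(S')| = |N_G(S)| \geq \sum_{a\in S} f_a \geq |S'|$ holds because each $a\in S$ contributes at most $f_a$ copies to $S'$, the matching from Lemma~\ref{lemma:Hall} saturates $A'$, and disjointness of the resulting stars follows since $M$ is a matching (distinct leaves in $B$) and centres sit on the other side of the bipartition. Your handling of $f_a = 0$ via the degenerate single-vertex star is the right reading of ``exactly $f_a$ leaves.'' Everything checks out.
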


Finally, we record the following useful lemma, which says that every tree either has many bare paths or many leaves. In what follows, a path contained in a tree is called a bare path if all of its internal vertices have degree exactly 2. 
\begin{lemma}[{\cite[Lemma 2.1]{K}}]\label{lemma:paths-leaf}
Let $k,\ell, n$ be positive integers. Then, every $n$-vertex tree $T$ either contains at least $\ell$ leaves, or contains a collection of at least $\frac{n}{k+1}-(2\ell-2)$ vertex-disjoint bare paths of length $k$.
\end{lemma}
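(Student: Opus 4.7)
The plan is to prove the contrapositive: assume $T$ has fewer than $\ell$ leaves, and deduce the existence of at least $\frac{n}{k+1} - (2\ell - 2)$ vertex-disjoint bare paths of length $k$. The intuition is that a tree with few leaves is ``path-like''---most vertices have degree $2$ and arrange themselves into a small number of long paths---so that bare sub-paths of length $k$ can be carved off directly.

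Let $L$ denote the number of leaves of $T$, $B$ the number of branching vertices (those of degree at least $3$), and $D \subseteq V(T)$ the set of vertices of degree $\neq 2$. A routine application of the handshake identity $\sum_v \deg_T(v) = 2(n-1)$, combined with $\deg(v) \geq 3$ for $v \in B$ and the identity $|V_2| = n - L - B$, yields $B \leq L - 2$, whence $|D| = L + B \leq 2L - 2 \leq 2\ell - 4$. Contracting each maximal path of degree-$2$ vertices then produces a tree on $|D|$ vertices with $|D| - 1$ edges, each edge corresponding to a maximal bare path in the original $T$; hence the edges of $T$ partition into $p = |D| - 1 \leq 2\ell - 5$ maximal bare paths of lengths $m_1, \ldots, m_p$ summing to $n - 1$.

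Finally, from each maximal bare path of length $m_i$, I restrict to its $m_i - 1$ interior (degree-$2$) vertices and greedily extract $\lfloor (m_i - 1)/(k+1) \rfloor$ vertex-disjoint blocks of $k+1$ consecutive such vertices; each block forms a bare path of length $k$ in $T$, since even its endpoints lie in $V_2$. The restriction to interior vertices ensures vertex-disjointness across different maximal bare paths, since only the endpoints in $D$ could otherwise be shared. Using $\lfloor c/(k+1) \rfloor \geq (c - k)/(k+1)$ and $\sum_i m_i = n - 1$, the total count is at least
\[
\sum_{i=1}^p \frac{m_i - 1 - k}{k+1} \;=\; \frac{n-1}{k+1} - p \;\geq\; \frac{n}{k+1} - (2\ell - 2),
\]
where the last inequality rearranges to $(2\ell - 2) - p \geq \tfrac{1}{k+1}$ and follows from $p \leq 2\ell - 5$.

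No genuine obstacle arises: the argument rests on the handshake-based bound $|D| \leq 2L - 2$, the standard structural fact that contracting degree-$2$ vertices yields a tree on $|D|$ vertices, and a short arithmetic computation. The only mildly delicate step is ensuring vertex-disjointness of the extracted sub-paths across different maximal bare paths, and this is handled cleanly by the interior-vertex restriction.
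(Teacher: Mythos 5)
Your proof is correct and is essentially the standard argument for this well-known fact; the paper itself does not reprove the lemma but cites it from \cite{K}, where the same degree-counting bound $B\le L-2$, the decomposition of $T$ into $|D|-1$ maximal bare paths via contraction, and the extraction of $\lfloor(m_i-1)/(k+1)\rfloor$ disjoint length-$k$ bare paths from the interiors are used. The only tiny loose end is that the derived bound $p\le 2\ell-5$ is vacuous when $\ell\le 2$, but then the contrapositive hypothesis $L<\ell$ forces $n=1$ (every tree on at least two vertices has at least two leaves), and the required count $\frac{n}{k+1}-(2\ell-2)<1$ is met trivially by zero paths.
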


\subsection{Proof overview}\label{sec:outline}
In this subsection, we give an overview of the proof of Theorem~\ref{thm:main}. We begin by applying Lemma~\ref{lemma:paths-leaf} with suitable parameters to conclude that $T$ either contains many vertex-disjoint bare paths of length 4, or many leaves. 

If $T$ contains a collection $\mathcal{P}$ of many bare paths of length 4, this is dealt with easily by Lemma~\ref{lemma:bare-paths}, which is adapted from{~\cite[Lemma 6.1]{MPY}}. We first greedily embed into $G$ all vertices in $T$, except the three central vertices on each bare path in $\mathcal{P}$, using the high minimum degree condition, then use a Hall type matching argument to embed the central vertices. This is proved in Section~\ref{sec:barepath}. 

Now assume that $T$ contains many leaves, we consider two cases. Here, the motivation is Lemma~\ref{lemma:manyleaves}, which is proved in Section~\ref{sec:manyleaves}. It says that if $T$ contains a set $L$ of many leaves, such that every parent of leaves in $L$ is adjacent to at most $O(|L|/\log n)$ leaves in $L$, then we can embed $T$ into a graph with high minimum degree. The proof is again adapted from similar results in~\cite{MPY}. We first embed $T-L$ randomly, then use the randomness to ensure that every remaining vertex in the graph is a good candidate for a positive fraction of leaves in $L$ to be embedded there. This condition is enough for us to apply a Hall type matching argument to embed all leaves in $L$. The $\log n$ factor in the parent degree condition mentioned above is crucial, as it ensures the concentration step will work out.

It will turn out that the case division is based on whether $T$ contains around $k$ parents with very high leaf degrees. If not, then the leaves are spread out enough among their parents that we can apply Lemma~\ref{lemma:manyleaves} to embed $T$. Otherwise, the embedding of these around $k$ parents with high leaf degrees is the main bottleneck. The most extreme example is exactly the one used to prove Theorem~\ref{thm:random}, where $T$ contains a dominating set of size $k$, and each vertex inside is adjacent to around $n/k\approx\Delta$ leaves. The condition $\delta(G)\geq n-n^{1-(1+\eps)/k}$ does ensure the existence of a dominating set $K$ of size $k$, into which we should embed those parents, but this alone is not enough to embed the tree $T$. The Hall type matching argument we use at the end to embed the leaves generally requires a stronger property than $K$ being dominating. Moreover, these parents may be adjacent to each other, and many of them could also share a common neighbour, so it is also not straightforward to accurately place them inside $K$. Due to these technicalities, we need to prove Theorem~\ref{thm:main} separately with the following two results that deal with two overlapping ranges of $\Delta$. Note that in Theorem~\ref{thm:lowregime}, we can redefine $k$ to be $n/\Delta$ for convenience as the difference can be absorbed into the $\eps$ factor in this range.
 
\begin{theorem}\label{thm:highregime}
For any $\eps>0$, there exists $C>1$, such that the following is true for all sufficiently large $n$. Let $\Delta\geq2n\log\log n/\log n$, and suppose $k=\ceil{(n-1)/\Delta}-1\geq2$. If $G$ is an $n$-vertex graph with $\delta(G)\geq n-n^{1-(1+\eps)/k}$, then $G$ contains every tree in $\cT(n,\Delta)$.
\end{theorem}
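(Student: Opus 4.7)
The plan is to apply Lemma~\ref{lemma:paths-leaf} to $T$ with bare-path length $4$ and an appropriate leaf threshold $\ell$, yielding a dichotomy: either $T$ contains $\Omega(n)$ vertex-disjoint bare paths of length $4$, or $T$ has $\Omega(n)$ leaves. In the bare-path case, I would invoke Lemma~\ref{lemma:bare-paths} as outlined in the sketch: greedily embed $T$ minus the three internal vertices of each selected bare path using the very high minimum degree condition $\delta(G)\geq n-n^{1-(1+\eps)/k}$ to make each extension step trivial, then finish by a Hall-type matching argument to place the internal triples on paths with the prescribed endpoints.

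In the many-leaves case, write $L$ for the set of leaves and, for each parent $p$, let $d_L(p)$ denote the number of its leaf children. Following the sketch, the case split is whether there exists a set $X$ of at most $k$ parents whose leaves exhaust all but $O(n/\log n)$ of $L$. If not, then after excluding the $k$ heaviest parents a linear-size subset of $L$ remains attached to parents with $d_L(p)=O(|L|/\log n)$, which is precisely the hypothesis required to invoke Lemma~\ref{lemma:manyleaves} to embed $T$.

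The substantive case is when such an $X$ exists. The first step is to locate a set $K\subseteq V(G)$ of size exactly $k$ which is not merely dominating but pseudorandom in the following sense: every $v\in V(G)\setminus K$ is adjacent to almost all of $K$, and for every $K'\subseteq K$ of size $j\leq k$, the number of vertices outside $K$ with no neighbour in $K'$ is at most about $n^{1-j(1+\eps)/k}$. A random $K$ achieves this via Lemma~\ref{lemma:chernoff2} together with a union bound, using that each vertex of $G$ has non-neighbourhood of size at most $n^{1-(1+\eps)/k}$. I would then embed the heavy parent set $X$ into $K$ preserving the relevant adjacencies in $T$, extend greedily to the rest of the skeleton $T-L$ inside $V(G)\setminus K$, and finally apply Lemma~\ref{lemma:hallmatching} to attach the leaves of $L$; the pseudorandomness of $K$ is exactly what verifies Hall's condition against any subset of the leaves of heavy parents, as the leaf demand of any subset of $X$ of size $j$ is bounded by the number of common neighbours guaranteed by the bound above.

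The main obstacle is the last case. The pseudorandom estimate $n^{1-j(1+\eps)/k}$ is near-critical when $j$ approaches $k$, so the Chernoff and union-bound step requires tight bookkeeping with essentially no slack in the exponent. Moreover, placing $X$ inside $K$ is awkward: heavy parents may be adjacent to each other in $T$, may share common neighbours, or may be connected by short paths through non-heavy vertices, yet $|K|=k$ offers no room to spare, so coordinating the skeleton embedding with the eventual Hall condition on the leaves is the delicate heart of the proof. The high-regime hypothesis $\Delta\geq 2n\log\log n/\log n$, which forces $k\leq \log n/(2\log\log n)$, is used throughout to keep the union bounds small and to ensure that the total leaf demand from heavy parents stays within the common-neighbour budget that $K$ provides.
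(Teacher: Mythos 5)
Your high-level plan matches the paper's: split on whether $T$ has many bare paths (then use Lemma~\ref{lemma:bare-paths}) or many leaves, and in the latter case split on whether the leaves concentrate on about $k$ ``heavy'' parents, handling the dispersed case via Lemma~\ref{lemma:manyleaves}. However, the heavy-parent case contains a genuine gap which you correctly flag but do not resolve, and that is precisely where the paper's proof has its key idea.

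You propose to take a random $K$ of size $k$ that is ``pseudorandom,'' embed the heavy parents into $K$, and verify Hall's condition. But you yourself observe that heavy parents can be adjacent to one another in $T$, can share common neighbours in $T$, and can be joined by short paths through light vertices, and that $|K|=k$ leaves no slack. Without a further idea this step simply does not go through: if two heavy parents are adjacent in $T$ and you wish to map both into $K$, you need their images to be adjacent in $G$, and a generic dominating $K$ gives you no such guarantee. The paper resolves this cleanly by proving (Claim~\ref{claim1}) that $G$ contains a set $K$ of size $k$ that is simultaneously dominating and a \emph{clique}, via a short counting argument: double-count ordered tuples $(v_1,\dots,v_k,w)$ where $v_1,\dots,v_k$ form a clique and $w$ is a non-neighbour of all of them; the upper bound $n\cdot(n^{1-(1+\eps)/k})^k=n^{k-\eps}$ contradicts the lower bound $n^k\exp(-k^2n^{-(1+\eps)/k})$ when no dominating clique exists, using $k\leq\log n/(2\log\log n)$. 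The clique structure then makes the skeleton embedding with $p_i\mapsto v_i$ routine: adjacencies between heavy parents are free, and for a light vertex that needs to land in the common neighbourhood of up to $k$ prescribed $v_i$'s, the slack $|L'|>kn^{1-(1+\eps)/k}$ suffices.

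A smaller point: your proposed pseudorandom condition (``for every $K'\subseteq K$ of size $j$, at most $n^{1-j(1+\eps)/k}$ vertices have no neighbour in $K'$'') is stronger than needed and, as you note, near-critical at $j=k$. The paper's Hall verification is more elementary and avoids this: because every heavy parent has leaf-degree exceeding $n^{1-(1+\eps)/k}$, any proper subset $I\subsetneq[k]$ automatically satisfies $\sum_{i\in I}d_i\leq|L'|-n^{1-(1+\eps)/k}$, so a single application of the minimum degree bound handles $I\subsetneq[k]$, and only the full set $I=[k]$ uses that $K$ is dominating. So the combinatorial structure of the heavy-parent threshold, not pseudorandomness, is what makes Hall's condition go through with no bookkeeping.
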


\begin{theorem}\label{thm:lowregime}
For any $\eps>0$, there exists $C>1$, such that the following is true for all sufficiently large $n$. Let $Cn/\log n\leq\Delta\ll n$, and let $k=n/\Delta$. If $G$ is an $n$-vertex graph with $\delta(G)\geq n-n^{1-(1+\eps)/k}$, then $G$ contains every tree in $\cT(n,\Delta)$.
\end{theorem}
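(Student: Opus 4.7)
The plan is to follow the three-case structure outlined in Section~\ref{sec:outline}. Applying Lemma~\ref{lemma:paths-leaf} with path length $4$ and a well-chosen leaf threshold splits $T \in \cT(n,\Delta)$ into two alternatives: either $T$ contains many vertex-disjoint bare paths of length $4$, or $T$ has a large set $L$ of leaves. In the first alternative I would invoke the bare-paths embedding lemma alluded to in the outline: greedily embed every vertex of $T$ except the three interior vertices of each bare path, which is possible because $\delta(G) \geq n - n^{1-(1+\eps)/k}$ forces each newly embedded vertex to have at least $n-o(n)$ valid host choices, and then finish the interior triples by a Hall-type matching via Lemma~\ref{lemma:hallmatching}.

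Assume now that $T$ contains a large leaf set $L$, with parent set $P$. The case division is based on how concentrated $L$ is at $P$. If fewer than roughly $k$ parents each carry more than a $1/\log n$-fraction of $L$, I would pass to a subcollection of leaves all of whose parents have leaf-degree $O(|L|/\log n)$, and then apply Lemma~\ref{lemma:manyleaves} directly: randomly embed $T-L$, use Azuma-type concentration to guarantee that every vertex of $G$ remains a candidate host for a positive fraction of the trimmed leaves, and conclude with a Hall-type matching.

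The hard subcase is when there are $\sim k$ \emph{heavy} parents in $P$, each carrying $\Theta(\Delta)$ leaves, together covering nearly all of $V(T)$. Here the minimum degree hypothesis is used sharply: the expected number of common non-neighbours of $k$ uniformly random vertices of $G$ is at most $n \cdot (n^{-(1+\eps)/k})^k = n^{-\eps} = o(1)$, so $G$ contains a dominating set $K$ of size $k$. I would refine this to a \emph{balanced} dominating set, meaning that for every $S \subseteq K$ the number of vertices of $V(G)\setminus K$ whose $K$-neighbourhood equals $S$ is close to its random expectation; the refinement is again probabilistic and uses Lemma~\ref{lemma:chernoff1}, since the $2^k \leq n^{o(1)}$ events one must control fit comfortably within a union bound. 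The strategy is then to embed the heavy parents bijectively onto $K$, embed the remaining inner vertices of $T$ into $V(G)\setminus K$ at random, and finally embed $L$ using Lemma~\ref{lemma:hallmatching}.

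The principal obstacle is this hard subcase. First, the heavy parents may be pairwise adjacent in $T$ or may share a common non-heavy neighbour in $T$, so the bijection from heavy parents onto $K$ must both respect the $G$-adjacencies among chosen images and reserve feasible ``meeting vertices'' in $V(G)\setminus K$ for shared neighbours. Since $k \leq \log n$, there are at most $\binom{k}{2} = O((\log n)^2)$ such constraints, far fewer than $k!$ possible bijections, and a random assignment corrected by a small number of local swaps should meet all of them. Second, the Hall condition needed at the final step --- that for every subcollection of heavy parents the supply of still-unused common neighbours in $V(G)\setminus K$ exceeds the total leaf demand --- is strictly stronger than mere domination by $K$. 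Verifying it is exactly where the balanced domination of $K$ combines with the randomness of the inner embedding; this coordination between the three embedding stages is the crux of the proof.
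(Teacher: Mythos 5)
Your three-case skeleton matches the paper exactly: Lemma~\ref{lemma:paths-leaf} to separate the bare-paths case (handled by Lemma~\ref{lemma:bare-paths}) from the many-leaves case, and then a split on whether $\sim k$ parents each carry $\Theta(\Delta)$ leaves, with Lemma~\ref{lemma:manyleaves} disposing of the ``spread-out'' alternative. That part is correct and essentially identical to the paper.

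The gap is in the heavy-parent subcase, and it is not merely a technicality you have deferred --- it is precisely the difficulty that the paper's proof is built to avoid. You propose to find a dominating set $K$ of size $k$, refine it to be ``balanced'', and embed the heavy parents bijectively onto $K$. Two steps here fail in the regime $Cn/\log n \le \Delta \ll n$.

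First, the bijective embedding is not achievable. When some or all of the heavy parents share a common left-neighbour $q$ in $T-L$, that $q$ must go to a common $G$-neighbour of the images of those heavy parents. At the low end $\Delta = \Theta(n/\log n)$ we have $k \cdot n^{1-(1+\eps)/k} = \Theta(n\log n) \gg n$, so a worst-case $G$ of the given minimum degree can have $k$ vertices with no common neighbour at all. A ``random assignment corrected by local swaps'' does not escape this: the obstruction is that the target sets can genuinely be empty, not that a bijection is hard to coordinate. (In the high-range proof the paper does exactly what you describe, but there $k n^{1-(1+\eps)/k} \ll n$ makes common neighbours plentiful; the paper remarks explicitly that this breaks in the low range.)

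Second, your ``balanced dominating set'' refinement cannot be obtained from Lemma~\ref{lemma:chernoff1} as you suggest. Your existence argument for $K$ only gives expectation $o(1)$ for the number of undominated vertices, so a dominating $K$ exists but occurs with probability bounded away from $1$; once you condition on this event you have lost the independence needed for Chernoff, and for an adversarial $G$ there is no reason the non-neighbourhoods of the vertices of a dominating $K$ should resemble independent random sets. The quantitative Hall condition you correctly identify as necessary --- that for every small deficit $j$ the number of vertices dominated only by the $j$ missing parents is at most $jn/(C\log n)$ --- need not hold for any dominating set of size $k$.

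The paper circumvents both problems by not insisting that the heavy parents all land in a dominating set. It takes $R_1$ to be a uniformly random set of size $k' = (1-\mu)k$ and a small random buffer $R_2$, and proves via Lemma~\ref{lemma:chernoff2} (applied to random sets, where it is valid) that (i) every vertex has at least $\eps k'/10$ neighbours in $R_1$, (ii) every vertex of $R_1 \cup R_2$ has at least $(1-\eps/20)k'$ neighbours in $R_1$, and (iii) every vertex has at least half its potential neighbours in $R_2$. The $Q$-vertices (left-neighbours of heavy parents) are routed into $R_2$, so that when a heavy parent is embedded its predecessor sits in $R_1 \cup R_2$ and, by (ii), almost always has room in $R_1$; this lands a $(1-\eps/20)$-fraction of the heavy parents in $R_1$ rather than all of them. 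Hall's condition is then verified using Claim~\ref{claim:ineq} together with property (i), which replaces domination: any subfamily $I$ whose leaf demand is near-maximal must omit at most $\eps k'/100$ parents, and then (i) guarantees $\psi(\{p_i : i \in I\})$ already dominates $W$. Your proposal correctly flags this Hall condition as the crux but does not supply an argument that closes it; the paper's relaxation from ``all parents in a dominating $K$'' to ``most parents in a merely-expanding $R_1$'' is the missing idea.
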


The high range when $\Delta\gg n\log\log n/\log n$ and $k\ll\log n\log\log n$ is easier, as in this range we can actually find a dominating set $K$ of size $k$ with $G[K]$ being a clique. Moreover, since $kn^{1-(1+\eps)/k}\ll n$ in this range, as long as $|L|>kn^{1-(1+\eps)/k}$, while embedding $T-L$, we can easily find a common neighbour for any $k$ vertices. This allows us to embed $T-L$ mostly greedily, while precisely placing the $k$ parents with large leaf degrees into $K$. Finally, since $\Delta\gg n^{1-(1+\eps)/k}$ in this range, $K$ being dominating turns out to suffice for embedding $L$ with a simple Hall type argument. This range is handled in Section~\ref{sec:main1}.

In the low range when $Cn/\log n\leq\Delta\ll n$, in particular in the low end when $\Delta=\Theta(n/\log n)$, $kn^{1-(1+\eps)/k}=\Theta(n\log n)$, so a mostly greedy embedding will not work because we cannot guarantee that any $k$ vertices have a common neighbour. Furthermore, $K$ being dominating is no longer enough for the Hall type argument at the end. To combat these, we will instead use a randomised approach, which requires $k\gg1$ and so $\Delta\ll n$. Let $R_1$ and $R_2$ be disjoint random subsets of $V(G)$ with sizes roughly $k$ and $n^{\eps/100}$, respectively. Then, we can show that with high probability, every vertex is adjacent to at least half of the vertices in $R_2$, and every vertex in $R_1\cup R_2$ is adjacent to at least a $(1-\eps/20)$-fraction of the vertices in $R_1$. These two conditions together allow us to embed $T-L$ while ensuring that at least a $(1-\eps/20)$-fraction of the parents are embedded into $R_1$. Moreover, we can show that with high probability, every vertex is adjacent to an $(\eps/10)$-fraction of vertices in $R_1$, so that the Hall type argument works to embed the leaves. This range is handled in Section~\ref{sec:main2}.

\section{Proof of Theorem~\ref{thm:main}}\label{sec:main}
In this section, we prove our main result, Theorem~\ref{thm:main}. As mentioned in Section~\ref{sec:outline}, it suffices to prove Theorem~\ref{thm:highregime} and~\ref{thm:lowregime}, which we will do in Section~\ref{sec:main1} and Section~\ref{sec:main2}, respectively. Before that, we first prove two intermediate results in Section~\ref{sec:barepath} and Section~\ref{sec:manyleaves}, which show that a tree $T$ can be embedded into a graph with high minimum degree if $T$ has many bare paths, and if $T$ has many leaves sufficiently spread out among their parents, respectively.

\subsection{Many bare paths}\label{sec:barepath}
In this subsection, we prove the following lemma, which says that if a tree contains many vertex-disjoint bare paths of length 4, then it can be embedded into a graph with high minimum degree. This generalises~{\cite[Lemma 6.1]{MPY}}, though the proof is essentially the same.
\begin{lemma}\label{lemma:bare-paths}
Let $m\leq n/100$, let $G$ be an $n$-vertex graph with $\delta(G)\ge n-m$, and let $T$ be an $n$-vertex tree that contains $10m$ vertex-disjoint bare paths of length 4. Then, $G$ contains a copy of $T$.
\end{lemma}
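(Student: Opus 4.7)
The plan is to embed $T$ in two stages: first greedily embed $T$ with the three internal vertices of each bare path removed, then use two Hall-type matching arguments to fill in the bare-path interiors.

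Write $\mathcal{P}$ for the collection of $10m$ vertex-disjoint bare paths of length $4$, with the $i$-th written as $v_0^{(i)} v_1^{(i)} v_2^{(i)} v_3^{(i)} v_4^{(i)}$, and set $V^* = \bigcup_i \{v_1^{(i)}, v_2^{(i)}, v_3^{(i)}\}$, so $|V^*| = 30m$. Let $T' = T - V^*$, a forest on $n - 30m$ vertices. I first embed $T'$ into $G$ greedily, processing each component in BFS order from an arbitrary root: at every step fewer than $n - 30m$ vertices have been embedded, so by $\delta(G) \ge n - m$ there are at least $(n-m) - (n - 30m - 1) = 29m + 1$ candidates in the neighborhood of the already-embedded parent (or among all unused vertices, when starting a new component), so the greedy process never fails. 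Let $\phi \colon V(T') \to V(G)$ denote the resulting embedding, put $U^* = V(G) \setminus \phi(V(T'))$ so $|U^*| = 30m$, and write $u_j^{(i)} = \phi(v_j^{(i)})$ for $j \in \{0, 4\}$.

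Next, I embed the pairs $v_1^{(i)}, v_3^{(i)}$ into $U^*$. Let $H_1$ be the bipartite graph with left side $\bigcup_i \{v_1^{(i)}, v_3^{(i)}\}$ (of size $20m$) and right side $U^*$, with $v_1^{(i)} \sim u$ iff $u \in N(u_0^{(i)})$ and $v_3^{(i)} \sim u$ iff $u \in N(u_4^{(i)})$. Each left vertex has at least $|U^*| - m = 29m \ge 20m$ neighbors, so Hall's condition is trivially satisfied and Lemma~\ref{lemma:Hall} produces a left-saturating matching, assigning distinct images $u_1^{(i)}, u_3^{(i)} \in U^*$.

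Finally, I embed the middle vertices $v_2^{(i)}$ into $U^{**} := U^* \setminus \{u_1^{(i)}, u_3^{(i)} : i\}$, a set of size exactly $10m$. Let $H_2$ be the bipartite graph between $\{v_2^{(i)}\}_i$ and $U^{**}$ defined by $v_2^{(i)} \sim u$ iff $u \in N(u_1^{(i)}) \cap N(u_3^{(i)})$. The minimum-degree bound gives every left vertex at least $|U^{**}| - 2m = 8m$ neighbors. Symmetrically, each $u \in U^{**}$ has at most $m$ non-neighbors in $V(G)$, hence at most $m$ in the $10m$ distinct vertices $\{u_1^{(i)}\}_i$ and at most $m$ in $\{u_3^{(i)}\}_i$, so $u$ has at least $10m - 2m = 8m$ neighbors on the left. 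To check Hall's condition on $H_2$, suppose some $S$ violates it and let $W = U^{**} \setminus N(S)$, so $|W| > 10m - |S|$. Each $s \in S$ has all of $W$ among its non-neighbors in $U^{**}$, forcing $|W| \le 2m$; each $u \in W$ has all of $S$ among its non-neighbors on the left, forcing $|S| \le 2m$. But then $|W| > 10m - |S| \ge 8m$, contradicting $|W| \le 2m$. Hall's theorem therefore yields a perfect matching of $H_2$, completing the embedding. The main obstacle is this last step: the degree ratio in $H_2$ is tight (both sides have $\ge 8m$ out of $10m$), so a one-sided degree bound does not suffice, and one must invoke the minimum-degree bound on both sides of $H_2$ simultaneously to verify Hall's condition.
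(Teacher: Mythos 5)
Your proof is correct, and it takes a genuinely different route from the paper's. The paper, after greedily embedding $T' = T - V^*$, applies Dirac's Theorem to the leftover graph $G'$ on $30m$ vertices (which has $\delta(G') \geq 29m > |G'|/2$) to find a Hamilton cycle, thereby pre-packaging the unused vertices into $10m$ vertex-disjoint paths $x_i w_i y_i$ of length $2$. A single Hall-type matching on a $10m\times 10m$ auxiliary bipartite graph then assigns each pre-built path $x_j w_j y_j$ to a bare-path slot, with edge $a_i b_j$ present iff both $u_i x_j$ and $v_i y_j$ are edges of $G$; the Hall condition check there also uses degree bounds on both sides, much as in your $H_2$ argument. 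You instead run two sequential Hall-type matchings directly on $U^*$: first placing the two near-endpoint vertices $v_1^{(i)}, v_3^{(i)}$ (where Hall's condition is immediate since $29m \geq 20m$), then placing the middle vertex $v_2^{(i)}$ into the common neighbourhood of its two already-placed neighbours, with the two-sided degree argument in $H_2$ handling the tight case. Your approach is slightly more elementary in that it avoids invoking Dirac's Theorem, at the mild cost of an extra matching step; the paper's use of the Hamilton cycle serves exactly to collapse your two matchings into one.
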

\begin{proof}
From assumption, there is a collection $\mathcal P=\{P_1,\ldots,P_\ell\}$ of $\ell=10m$ vertex-disjoint bare paths in $T$ of length $4$. Let $T'$ be the forest obtained by removing all internal vertices of the paths in $\mathcal P$ from $T$, so that $|T'|=n-3\ell$.

Since $\delta(G)\geq n-m\geq |T'|$, we can greedily find a copy $S'$ of $T'$ in $G$. Let $G'=G-V(S')$. Note that $|G'|=3\ell=30m$, so $\delta(G')\geq|G'|-m\geq|G'|/2$. Thus, by Dirac's Theorem, $G'$ contains a Hamilton cycle. In particular, we can label the $3\ell$ vertices in $G'$ as $w_1,\ldots,w_\ell,x_1,\ldots,x_\ell,y_1,\ldots,y_\ell$, so that for each $i\in [\ell]$, $x_iw_iy_i$ is a path in $G'$.

For each $i\in [\ell]$, let $u_i$ and $v_i$ be the copies of the two endpoints of $P_i$ in $S'$. Let $K$ be an auxiliary bipartite graph with bipartition classes $A=\{a_1,\ldots,a_\ell\}$ and $B=\{b_1,\ldots,b_\ell\}$, such that for any $i,j\in[\ell]$, there is an edge $a_ib_j$ in $K$ if and only if both $u_ix_j$ and $v_iy_j$ are edges in $G$. Since $\delta(G)\geq n-m$, for every $i\in[\ell]$, $d_K(a_i),d_K(b_i)\geq\ell-2m$. Then, for any $I\subset A$ with $0<|I|\leq\ell-2m$, we have $|N_K(I,B)|\geq\ell-2m\geq |I|$. For any $I\subset A$ with $|I|>\ell-2m$, we have $|N_K(I,B)|=|B|\geq|I|$, as any $b\in B\setminus N_K(I,B)$ would satisfy $d_K(b)<2m<\ell-2m$, a contradiction. Thus, by Lemma~\ref{lemma:Hall}, there is a perfect matching in $K$, say matching $a_i$ with $b_{\sigma(i)}$ for every $i\in[\ell]$. This then implies that $S'$ along with the paths $u_ix_{\sigma(i)}w_{\sigma(i)}y_{\sigma(i)}v_i$ for all $i\in[\ell]$ form a copy of $T$ in $G$, as required.
\end{proof}

\subsection{Many leaves and many parents}\label{sec:manyleaves}
In this subsection, we prove the following lemma, which says that if a tree contains a set $L$ of many leaves, such that their parents all have at most $|L|/10\log n$ neighbours in $L$, then it can be embedded into a graph with high minimum degree. This is a simpler version of several related results in~\cite{MPY}.
\begin{lemma}\label{lemma:manyleaves}
Let $m\leq n/20$, let $G$ be an $n$-vertex graph with $\delta(G)\geq n-m$, and let $T$ be an $n$-vertex tree. Let $L$ be a set of leaves in $T$, and let $P$ be the set of parents of the leaves in $L$. Suppose that $|L|\geq 5m$, and $\max\{d(p,L):p\in P\}\leq|L|/10\log n$, then $G$ contains a copy of $T$.  
\end{lemma}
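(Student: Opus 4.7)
My plan is to embed $T-L$ into $G$ first via a randomised process, and then extend this to an embedding of $T$ by matching the leaves of $L$ to the unused vertices of $V(G)$ via Hall's theorem.

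Concretely, given any embedding $\varphi:T-L\to V(G)$, let $U=V(G)\setminus\varphi(T-L)$, so $|U|=|L|$, and define the auxiliary bipartite graph $H$ on parts $L$ and $U$ with $\ell u\in E(H)$ iff $\varphi(p(\ell))u\in E(G)$, where $p(\ell)$ denotes the parent of $\ell$. A matching in $H$ saturating $L$ extends $\varphi$ to an embedding of $T$, so it suffices to verify Hall's condition on $H$. I would show that the pointwise property $d_H(v)\ge |L|/2$ for every $v\in U$ already implies Hall: if some $S\subseteq L$ violated it, then every $v\in U\setminus N_H(S)$ would have $d_H(v)\le |L|-|S|$, forcing $|S|\le |L|/2$, but any single $\ell\in S$ satisfies $d_H(\ell)\ge |U|-m\ge 4|L|/5\ge |S|$ using $|L|\ge 5m$, yielding $|N_H(S)|\ge |S|$, a contradiction.

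To produce such a $\varphi$, I would root $T$ at an internal vertex and embed $T-L$ in BFS order, choosing each image uniformly at random from the set of valid extensions (always of size at least $\delta(G)-(n-|L|-1)\ge 4|L|/5$, so the procedure succeeds). For fixed $v\in V(G)$, writing $Z=d_H(v)=\sum_{p\in P}d(p,L)\mathbf{1}[\varphi(p)\in N_G(v)]$, at each step the conditional probability that $\varphi(p)\in N_G(v)$ is at least $1-O(m/|L|)\ge 3/4$, giving $\mathbb{E}[Z]\ge 3|L|/4$. Applying Azuma's inequality (Lemma~\ref{lemma:azuma}) to the Doob martingale revealing $\varphi$ one vertex at a time, with Lipschitz constants $c_p=d(p,L)\le |L|/(10\log n)$ and therefore $\sum_p c_p^2\le |L|^2/(10\log n)$, one obtains $\mathbb{P}(Z<|L|/2)=o(n^{-1})$. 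A union bound over $v\in V(G)$ then yields an embedding with $d_H(v)\ge |L|/2$ for every $v\in U$, completing the proof.

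The main obstacle is the concentration in the last step: the greedy random embedding introduces nontrivial dependencies among the $\varphi(p)$'s, so perturbing one image can shift the conditional distributions of all later images. Making the Doob martingale argument rigorous requires carefully choosing the exposure order and bounding the coupling mismatch so that the effective Lipschitz constant at step $i$ is still $O(d(p_i,L))$ rather than inflated by the propagation. It is precisely here that the $\log n$ factor in the hypothesis $d(p,L)\le |L|/(10\log n)$ is essential, as it keeps the Lipschitz constants small enough for Azuma's inequality to give tail bounds that survive a union bound over the $n$ candidate vertices $v$.
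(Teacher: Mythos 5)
Your plan is essentially the same as the paper's: randomly embed $T-L$ vertex by vertex, show that every vertex outside the image sees a substantial weight of the leaf demands via Azuma, and finish by verifying Hall's condition. The Hall reduction you give is a clean rephrasing of the paper's verification via Lemma~\ref{lemma:hallmatching}, and the per-step conditional probability estimate $\ge 3/4$ is correct.

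Two points need tightening. First, the target $d_H(v)\ge|L|/2$ is too aggressive for the constant $10$ in the hypothesis. Writing $X_p=d(p,L)\mathbf{1}[\varphi(p)\in N_G(v)]$, to apply Lemma~\ref{lemma:azuma} below the mean you may take $a_p=3d(p,L)/4\le\mathbb E[X_p\mid\text{past}]$, but then the requirement $|X_p-a_p|\le c_p$ forces $c_p\ge 3d(p,L)/4$, and plugging $t=|L|/4$ with $\sum_p c_p^2\le\frac{9}{16}\cdot|L|^2/(10\log n)$ into Lemma~\ref{lemma:azuma} yields only $\exp(-5\log n/9)=n^{-5/9}$; with your stated $c_p=d(p,L)$ the bound is merely $n^{-5/16}$. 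Neither survives the union bound over $n$ vertices. The fix is the paper's choice: aim for the weaker target $d_H(v)\ge|L|/4$, take $a_p=d(p,L)/2$ and $c_p=d(p,L)/2$, so the exponent becomes $5/4$. Your Hall argument still goes through with this weaker bound, since one then deduces $|S|\le 3|L|/4$ and the observation $d_H(\ell)\ge|L|-m\ge 4|L|/5>3|L|/4$ still gives the contradiction.

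Second, the worry about ``coupling mismatch'' and Lipschitz inflation in a Doob-martingale argument is a red herring here. That concern is real for McDiarmid-type bounded-differences inequalities, where one must control how resampling an early random choice propagates through all later greedy choices. But Lemma~\ref{lemma:azuma} is a one-step supermartingale inequality: it only needs $\mathbb E[X_i\mid X_1,\dots,X_{i-1}]\ge a_i$ together with $|X_i-a_i|\le c_i$. The first is exactly the per-step conditional probability bound $\mathbb P(\varphi(p_i)\in N_G(v)\mid \varphi(t_1),\dots)\ge 3/4$, valid at the moment $p_i$ is placed regardless of how later choices depend on earlier ones, and the second is automatic since $X_i\in\{0,d_i\}$. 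No exposure-ordering subtlety or coupling bound is required; the paper applies Lemma~\ref{lemma:azuma} directly in exactly this way.
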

\begin{proof}
Let $P=\{p_1,\ldots,p_\ell\}$, and let $d_i=d(p_i,L)$ for every $i\in[\ell]$. Order the vertices in the tree $T-L$ as $t_1,\ldots,t_r$, such that $t_1=p_1$, and for every $2\leq i\leq r$, $t_i$ has a unique neighbour to its left in this ordering. Moreover, by relabelling if necessary, we may assume that $p_1,\ldots,p_\ell$ appear in this order. Consider the following random embedding $\psi$ of $T-L$ into $G$. Pick $\psi(t_1)\in V(G)$ uniformly at random. Then, for each $2\leq i\leq r$, let $j_i\in[i-1]$ be the unique index such that $t_{j_i}$ is a neighbour of $t_i$, and pick $\psi(t_i)$ uniformly at random from the set $Y_i=N(\psi(t_{j_i}))\setminus\{\psi(t_1),\ldots,\psi(t_{i-1})\}$. This procedure always succeeds in producing a random embedding of $T-L$, as $\delta(G)\geq n-m$ and $|L|\geq5m$ together imply that $|Y_i|\geq4m>0$. 

Now, note that for every $w\in V(G)$ and every $2\leq i\leq\ell$, if $p_i=t_{i'}$ and $Y_{i'}$ is defined as above, then,
\[\mathbb{P}(w\not\in N(\psi(p_i))\mid\psi(t_1),\ldots,\psi(t_{i'-1}))\leq\frac{|Y_{i'}\setminus N(w)|}{|Y_{i'}|}\leq\frac{m}{4m}<\frac12.\]
Also, $\mathbb{P}(w\not\in N(\psi(p_1)))\leq m/n<1/2$. Therefore, by Lemma~\ref{lemma:azuma},
\begin{align*}
\mathbb{P}\left(\sum_{i:w\in N(\psi(p_i))}d_i<|L|/4\right)&\leq\exp\left(-\frac{(|L|/4)^2}{2\frac{|L|/4}{|L|/10\log n}(|L|/10\log n)^2}\right)\\
&=\exp\left(-\frac{5|L|}{4|L|/\log n}\right)=\exp\left(-\frac{5\log n}{4}\right)=o(n^{-1}).
\end{align*}
By a union bound, with high probability, every $w\in V(G)$ satisfies $\sum_{i:w\in N(\psi(p_i))}d_i\geq|L|/4$. Fix an embedding $\psi$ with this property.

Let $W$ be the set of remaining vertices, it suffices now to embed $L$ appropriately into $W$, noting that $|L|=|W|$. To do this, we verify Hall's condition. For any $\varnothing\not=I\subset[\ell]$, if $\sum_{i\in I}d_i\leq|L|-m$, then $|N(\psi(\{p_i:i\in I\}),W)|\geq|W|-m\geq\sum_{i\in I}d_i$ as $\delta(G)\geq n-m$. If $I\subset[\ell]$ satisfies $\sum_{i\in I}d_i>|L|-m$, then $N(\psi(\{p_i:i\in I\}),W)=W$ and $|N(\psi(\{p_i:i\in I\}),W)|=|W|=|L|\geq\sum_{i\in I}d_i$, because any $w\in W\setminus N(\psi(\{p_i:i\in I\}),W)$ would satisfy $\sum_{i:w\in N(\psi(p_i))}d_i\leq m<|L|/4$, a contradiction. Hence, Hall's condition holds and we can use Lemma~\ref{lemma:hallmatching} to finish the embedding. 
\end{proof}

\subsection{High range}\label{sec:main1}
In this subsection, we prove Theorem~\ref{thm:highregime}, which deals with the high range $\Delta\geq2n\log\log n/\log n$.
\begin{proof}[Proof of Theorem~\ref{thm:highregime}]
Since $\Delta\geq2n\log\log n/\log n$, $k=\ceil{(n-1)/\Delta}-1\leq n/\Delta\leq\log n/2\log\log n$, so 
\[n^{1-\frac{1+\eps}{k}}\leq n\exp\left(-\frac{(2+2\eps)\log n\log\log n}{\log n}\right)=\frac{n}{(\log n)^{2+2\eps}}\ll\frac\Delta{\log n}\ll\Delta.\]
Therefore, if $T$ contains $\Delta/100$ vertex-disjoint bare paths of length 4, then $G$ contains a copy of $T$ by Lemma~\ref{lemma:bare-paths}. Assume from now on that this is not the case.  

Let $L$ be the set of leaves in $T$, and let $P$ be the set of parents of $L$. Let $P^+=\{p\in P:d(p,L)>n^{1-(1+\eps)/k}\}$, and let $P^-=P\setminus P^+$. Let $L^+=N(P^+,L)$, and let $L^-=N(P^-,L)$. 

\medskip

\noindent\textbf{Case I.} $|P^+|\leq k-1$. Then, $|T-L^+|\geq n-(k-1)\Delta>\Delta$.

Suppose $T-L^+$ contains $\Delta/50$ vertex-disjoint bare paths of length 4. Note that unless such a bare path contains a vertex in $P^+$, it would also be a bare path in $T$. Hence, $T$ contains at least $\Delta/50-(k-1)\geq\Delta/100$ vertex-disjoint bare paths of length 4, a contradiction. 

Thus, by Lemma~\ref{lemma:paths-leaf}, $T-L^+$ contains at least $\Delta/20$ leaves. Since at most $k-1$ of these are not in $L^-$, we can find a set $L'$ of $\Delta/100$ leaves in $T-L^+$ that are in $L^-$. Let $P'=\{p_1,\ldots,p_\ell\}$ be the set of parents of $L'$. For every $i\in[\ell]$, let $d_i=d(p_i,L')$, and note that $d_i\leq n^{1-(1+\eps)/k}$. Since $\Delta/\log n\gg n^{1-(1+\eps)/k}$, we can apply Lemma~\ref{lemma:manyleaves} to find a copy of $T$ in $G$.

\medskip

\noindent\textbf{Case II.} $|P^+|\geq k$. Arbitrarily pick distinct $p_1,\ldots,p_k\in P^+$, let $P'=\{p_1,\ldots,p_k\}$, and let $L'=N(P',L)$. 

\begin{claim}\label{claim1}
There exists a dominating set $K\subset V(G)$ of size $k$ with $G[K]$ being a clique.
\end{claim}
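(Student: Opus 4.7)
My plan is to prove Claim~\ref{claim1} by a direct probabilistic argument. Let $m = n^{1-(1+\eps)/k}$, so that every vertex has at most $m$ non-neighbors in $G$, and recall that in the present range $2 \leq k \leq n/\Delta \leq \log n/(2\log\log n)$. I would sample $K \subseteq V(G)$ uniformly at random from among the $k$-subsets of $V(G)$, and show via two separate first-moment estimates that with positive (indeed, high) probability, $K$ simultaneously is a dominating set and induces a clique in $G$.

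For the domination property, I would observe that for any fixed $v \in V(G)$, the event that $v$ is neither in $K$ nor adjacent to any vertex of $K$ is exactly the event that $K$ lies inside the non-neighborhood of $v$, which has size at most $m$. The probability of this is at most $\binom{m}{k}/\binom{n}{k} \leq (m/n)^k = n^{-(1+\eps)}$. Summing over $v$ and applying Markov's inequality, $K$ fails to dominate with probability at most $n \cdot n^{-(1+\eps)} = n^{-\eps} = o(1)$.

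For the clique property, since $G$ has at most $nm/2$ non-edges, the expected number of non-edges with both endpoints in $K$ is at most
\[
\frac{nm}{2} \cdot \frac{k(k-1)}{n(n-1)} \;\leq\; \frac{k^2 m}{n} \;=\; k^2\, n^{-(1+\eps)/k}.
\]
The logarithm $2\log k - (1+\eps)(\log n)/k$ has positive derivative in $k$, so the right-hand side is maximized over the admissible range at the top endpoint $k = \log n/(2\log\log n)$, where a direct substitution gives $(\log n)^{-2\eps + o(1)} = o(1)$. Markov's inequality then yields that $G[K]$ is a clique with probability $1 - o(1)$, and a union bound with the previous paragraph produces a $K$ with both properties simultaneously.

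The only delicate point is verifying the clique estimate: the domination bound has a huge amount of slack (a full factor of $n^{\eps}$), but the clique bound $k^2 n^{-(1+\eps)/k}$ is only barely $o(1)$ when $k$ is near the top of its range, and this is precisely where the strengthened hypothesis $\Delta \geq 2n\log\log n/\log n$ of Theorem~\ref{thm:highregime} (as opposed to the weaker one of Theorem~\ref{thm:main}) is needed. Outside of checking that this monotonicity calculation works out, the argument is essentially routine.
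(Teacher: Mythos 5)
Your proof is correct, but it takes a genuinely different route from the paper's. The paper proves Claim~\ref{claim1} by a deterministic double-counting argument: it considers the set $X$ of ordered tuples $(v_1,\ldots,v_k,w)$ in which $\{v_1,\ldots,v_k\}$ is a clique and $w$ is adjacent to none of them, lower-bounds $|X|$ (assuming for contradiction that no dominating $k$-clique exists, so that every ordered $k$-clique---of which there are at least $n^k\exp(-k^2 n^{-(1+\eps)/k})$---has a witness $w$), and upper-bounds $|X|\le n\cdot(n^{1-(1+\eps)/k})^k=n^{k-\eps}$; the contradiction reduces to $k^2 n^{-(1+\eps)/k}<\eps\log n$. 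You instead sample a uniformly random $k$-subset $K$ and show via two first-moment/union bounds that $K$ fails to dominate with probability at most $n^{-\eps}$ and fails to be a clique with probability at most $k^2 n^{-(1+\eps)/k}=o(1)$. Both arguments hinge on essentially the same numerical fact (your clique bound requires $k^2 n^{-(1+\eps)/k}=o(1)$, a marginally stronger condition than the paper's $k^2 n^{-(1+\eps)/k}<\eps\log n$, but both hold comfortably under $k\le\log n/(2\log\log n)$), and your monotonicity check confirming that the estimate is tightest at the top of the range of $k$ is correct. Your version is arguably more modular, since the two requirements on $K$ are bounded independently and then combined by a union bound, whereas the paper's version couples them through the auxiliary set $X$; conversely, the paper's version stays entirely combinatorial. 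Your closing remark correctly identifies where the strengthened hypothesis $\Delta\ge 2n\log\log n/\log n$ of Theorem~\ref{thm:highregime} enters: precisely to cap $k$ so that the clique estimate goes to zero.
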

\begin{proof}[Proof of Claim~\ref{claim1}]
Consider the set $X\subset V(G)^{k+1}$ of all ordered tuples $(v_1,\ldots,v_k,w)$ of distinct vertices in $G$, such that $v_1,\ldots,v_k$ form a clique, and $w$ is adjacent to none of $v_1,\ldots,v_k$. 

Using the minimum degree condition, the number of choices for $v_1,\ldots,v_k$ is at least 
\[\prod_{i=0}^{k-1}\left(n-in^{1-\frac{1+\eps}k}\right)=n^k\prod_{i=0}^{k-1}\left(1-in^{-\frac{1+\eps}k}\right)\geq n^k\exp\left(-\sum_{i=0}^{k-1}2in^{-\frac{1+\eps}k}\right)\geq n^k\exp\left(-k^2n^{-\frac{1+\eps}k}\right).\]
If such a clique $K$ does not exist, then any choice of $v_1,\ldots,v_k$ has at least one corresponding choice of $w$ not adjacent to any $v_1,\ldots,v_k$, so $|X|\geq n^k\exp(-k^2n^{-(1+\eps)/k})$.

On the other hand, for any choice of $w$, there are at most $(n^{1-(1+\eps)/k})^k$ corresponding choices of $v_1,\ldots,v_k$ from the minimum degree condition, so $|X|\leq n\cdot(n^{1-(1+\eps)/k})^k=n^{k-\eps}$. 

However, as $k\leq\log n/2\log\log n$, 
\begin{align*}
|X|\geq n^k\exp\left(-k^2n^{-\frac{1+\eps}k}\right)&\geq n^k\exp\left(-k^2(\log n)^{-2-2\eps}\right)\\
&\geq n^k\exp\left(-\frac1{(\log n)^{2\eps}(\log\log n)^2}\right)>n^k\exp\left(-\eps\log n\right)=n^{k-\eps}\geq|X|,
\end{align*}
which is a contradiction. 
\renewcommand{\qedsymbol}{$\boxdot$}
\end{proof}
\renewcommand{\qedsymbol}{$\square$}
Let $K=\{v_1,\ldots,v_k\}$ be the dominating set given by Claim~\ref{claim1}. Order the vertices of $T-L'$ as $t_1,\ldots,t_r$, such that $t_1=p_1$, and for every $2\leq i\leq r$, $t_i$ has a unique neighbour to its left in this ordering. Embed $p_1$ to $v_1$. Greedily, extend this to an embedding of $T-L'$, with $p_i$ embedded to $v_i$ for every $i\in[k]$. This is possible as $|L'|>kn^{1-(1+\eps)/k}$, and in the worst case we need to ensure that the next chosen vertex is in the common neighbourhood of $k$ vertices. 

Let $W$ be the set of remaining vertices in $G$, it suffices now to embed $L'$ into $W$. Let $d_i=|N(p_i,L')|$ for each $i\in[k]$, we again verify Hall's condition. For every $\varnothing\not=I\subsetneq[k]$, $\sum_{i\in I}d_i\leq |L'|-n^{1-(1+\eps)/k}$ from the definition of $P^+$. Thus, $|N(\{v_i:i\in I\},W)|\geq|L'|-n^{1-(1+\eps)/k}\geq\sum_{i\in I}d_i$ from the minimum degree condition. Also, since $K$ is a dominating set, $|N(\{v_i:i\in[k]\},W)|=|N(K,W)|=|W|=|L'|=\sum_{i\in[k]}d_i$. Therefore, Hall's condition holds, and we can finish the embedding of $T$ using Lemma~\ref{lemma:hallmatching}.
\end{proof}

\subsection{Low range}\label{sec:main2}
In this subsection, we prove Theorem~\ref{thm:lowregime}, which deals with the low range $Cn/\log n\leq\Delta\ll n$.
\begin{proof}[Proof of Theorem~\ref{thm:lowregime}]
Let $C\gg1/\eps$. Since $Cn/\log n\leq\Delta\ll n$, we have $1\ll k\leq\log n/C$, and $n^{1-(1+\eps)/k}\leq n/n^{(1+\eps)C/\log n}=n/e^{(1+\eps)C}$.

Let $e^{-(1+\eps)C}\ll1/C\ll\mu\ll\eps$. If $T$ contains $\mu n/100$ vertex-disjoint bare paths of length $4$, then as $\delta(G)\geq n-n^{1-(1+\eps)/k}\geq n-\mu n/1000$, we can use Lemma~\ref{lemma:bare-paths} to find a copy of $T$ in $G$. Suppose that this is not the case. Set $k'=(1-\mu)k$, and split into two cases. 

\medskip

\noindent\textbf{Case I.} The number of vertices in $T$ that are adjacent to at least $n/C\log n$ leaves is at most $k'$. Let $P^+$ be the set of such vertices in $T$, and let $L^+$ be the set of leaves they are adjacent to. Then, $|T-L^+|\geq n-k'\Delta\geq n-(1-\mu)n=\mu n$. If $T-L^+$ contains at least $\mu n/50$ vertex-disjoint bare paths of length 4, then $T$ contains at least $\mu n/50-k'\geq\mu n/100$ vertex-disjoint bare paths of length 4, a contradiction. Hence, by Lemma~\ref{lemma:paths-leaf}, $T-L^+$ contains at least $\mu n/50$ leaves. Since at most $k'$ of these are in $P^+$, we can find a set $L'$ of $\mu n/100$ leaves in $T-L^+$ that are also leaves in $L$. Let $P'$ be the set of parents of $L'$, and note that every parent in $P'$ is adjacent to at most $n/C\log n$ leaves in $L'$. Since $(\mu n/100)/(n/C\log n)=\mu C\log n/100\geq10\log n$, we can apply Lemma~\ref{lemma:manyleaves} to find a copy of $T$ in $G$. 

\medskip

\noindent\textbf{Case II.} $T$ contains a set $P^+$ of $k'$ vertices, each of which is adjacent to at least $n/C\log n$ leaves in $T$. Let $L^+$ be the set of leaves $P^+$ is adjacent to. Order the vertices in $T-L^+$ as $t_1,\ldots,t_r$, such that $t_1\in P^+$, and for every $2\leq i\leq r$, $t_i$ has a unique neighbour to its left in this ordering. Let $Q$ be the set of vertices that are the unique left neighbours of vertices in $P^+$ in this ordering, so $|Q|\leq k'$. Note that $Q$ and $P^+$ may intersect. 
\begin{claim}\label{claim:ineq}
$\eps k'n/100C\log n\geq n^{1-(1+\eps)/k}$.
\end{claim}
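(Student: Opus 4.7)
The plan is to clear denominators and reduce the claim to an analytic inequality in one variable. Writing $k' = (1-\mu)k$ and rearranging, the claim is equivalent to
\[\eps(1-\mu)\, k\, n^{(1+\eps)/k} \;\geq\; 100\, C \log n.\]
So I want to lower bound the function $g(k) := k\, n^{(1+\eps)/k}$ on the range of $k$ permitted by the hypotheses.

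A direct derivative computation gives $g'(k) = n^{(1+\eps)/k}\bigl(1 - (1+\eps)\log n/k\bigr)$, so $g$ is monotonically decreasing on $(0,(1+\eps)\log n)$. The hypothesis $\Delta \geq Cn/\log n$ forces $k = n/\Delta \leq \log n/C$, which lies inside this decreasing range. Therefore
\[g(k) \;\geq\; g(\log n/C) \;=\; \frac{\log n}{C}\, n^{(1+\eps)C/\log n} \;=\; \frac{\log n}{C}\, e^{(1+\eps)C}.\]

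Substituting this lower bound, the target inequality collapses to
\[\eps(1-\mu)\, e^{(1+\eps)C} \;\geq\; 100\, C^2,\]
which holds once $C$ is taken sufficiently large in terms of $\eps$, consistent with the setup $e^{-(1+\eps)C} \ll 1/C \ll \mu \ll \eps$ (the exponential $e^{(1+\eps)C}$ beats any fixed polynomial in $C$ and $1/\eps$). I do not anticipate a genuine obstacle: the only care needed is to verify that the monotonicity range of $g$ covers the interval $k\in[1,\log n/C]$ that arises from the hypotheses, and this is immediate from $\log n/C < (1+\eps)\log n$.
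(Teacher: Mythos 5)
Your proof is correct and follows essentially the same route as the paper's: rewrite the claim as a lower bound on $k\,n^{(1+\eps)/k}$, observe via the derivative that this function is decreasing on $(0,(1+\eps)\log n)$, evaluate at $k=\log n/C$, and close with $C\gg 1/\eps$. The only cosmetic difference is that the paper first replaces $k'$ by $k/2$ (using $\mu\ll1$) to simplify constants, whereas you carry the $(1-\mu)$ factor through; this has no bearing on the argument.
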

\begin{proof}[Proof of Claim~\ref{claim:ineq}]
Since $k'=(1-\mu)k\geq k/2$, it suffices to show that $\eps kn^{(1+\eps)/k}\geq 200C\log n$. Consider the function $f(t)=tn^{(1+\eps)/t}$, whose derivative is 
\[f'(t)=n^{(1+\eps)/t}-(1+\eps)n^{(1+\eps)/t}\log n/t=n^{(1+\eps)/t}(1-(1+\eps)\log n/t).\]
Since $k\leq\log n/C$, $f(k)\geq f(\log n/C)=n^{(1+\eps)C/\log n}\log n/C\geq e^{C}\log n/C$. Hence, $\eps kn^{(1+\eps)/k}\geq\eps e^C\log n/C\geq 200C\log n$.
\renewcommand{\qedsymbol}{$\boxdot$}
\end{proof}
\renewcommand{\qedsymbol}{$\square$}

In particular, Claim~\ref{claim:ineq} implies that $|L^+|\geq k'n/C\log n\gg n^{1-(1+\eps)/k}$.

Let $R_1\subset V(G)$ be a uniformly random subset of size $k'$, and let $R_2\subset V(G)\setminus R_1$ be a uniformly random subset of size $n^{\eps/100}$. For every $v\in V(G)$, let $X_v$ and $Y_v$ be the number of non-neighbours of $v$ in $R_1$ and $R_2$, respectively. Then, $X_v$ is the hypergeometric random variable $\textup{HG}(n,m,k')$ with $m\leq n^{1-(1+\eps)/k}$ and $q=m/n\leq n^{-(1+\eps)/k}$. Thus, by Lemma~\ref{lemma:chernoff2}, for every $0<\lambda<1-q$,
\begin{align*}
\mathbb{P}(X_v>(q+\lambda)k')&\leq(2q^\lambda)^{k'}\leq(2n^{-(1+\eps)\lambda/k})^{(1-\mu)k}\\
&=n^{-(1+\eps)(1-\mu)\lambda+(1-\mu)k\log 2/\log n}\leq n^{-(1+\eps/2)\lambda+\log 2/C},
\end{align*}
where in the last step we used $\mu\ll\eps$ and $k\leq\log n/C$.

First, apply this with $\lambda=1-\eps/4$. By a union bound, with probability $1-o(1)$, every $v\in V(G)$ satisfies $X_v\leq(n^{-(1+\eps)/k}+1-\eps/4)k'\leq(1-\eps/10)k'$. Equivalently, every $v\in V(G)$ is adjacent to at least $\eps k'/10$ vertices in $R_1$. Then, apply this again with $\lambda=\eps/30$. By a union bound, with probability $1-(k'+n^{\eps/100})\cdot n^{-\eps/50}=1-o(1)$, every $v\in R_1\cup R_2$ is adjacent to at least $(1-\eps/20)k'$ vertices in $R_1$. 

Similarly, $Y_v$ is the hypergeometric random variable $\textup{HG}(n,m,n^{\eps/100})$ with $q=m/n\leq n^{-(1+\eps)/k}$. By Lemma~\ref{lemma:chernoff2} applied with $\lambda=1/3$,
\[\mathbb{P}(Y_v>n^{\eps/100}/2)\leq(2n^{-(1+\eps)/3k})^{n^{\eps/100}}=o(n^{-1}).\] 
Thus, with probability $1-o(1)$, every $v\in V(G)$ is adjacent to at least half of the vertices in $R_2$. Fix realisations of $R_1$ and $R_2$ with all of the above properties.

Now, consider the following procedure that finds an embedding $\psi$ of $T-L^+$ into $G$ by embedding $t_1,\ldots,t_r$ in this order. Recall that $t_1\in P^+$, arbitrarily pick $\psi(t_1)\in R_1$. For every $2\leq i\leq r$, let $j_i\in[i-1]$ be the unique index such that $t_{j_i}$ is adjacent to $t_i$, and do the following. 
\begin{enumerate}[label=\alph*)]
    \item\label{1} If $t_i\in P^+$, embed it into $N(\psi(t_{j_i}),R_1)\setminus\{\psi(t_1),\ldots,\psi(t_{i-1})\}$ if possible, and into $N(\psi(t_{j_i}))\setminus(\{\psi(t_1),\ldots,\psi(t_{i-1})\}\cup R_1\cup R_2)$ otherwise.
    \item\label{2} If $t_i\in Q\setminus P^+$, embed it into $N(\psi(t_{j_i}),R_2)\setminus\{\psi(t_1),\ldots,\psi(t_{i-1})\}$.
    \item\label{3} If $t_i\not\in Q\cup P^+$, embed it into $N(\psi(t_{j_i}))\setminus(\{\psi(t_1),\ldots,\psi(t_{i-1})\}\cup R_1\cup R_2)$. 
\end{enumerate}

Note that~\ref{1} and~\ref{3} are always possible as $\delta(G)\geq n-n^{1-(1+\eps)/k}$, and $|L^+|\geq k'n/C\log n\geq|R_1|+|R_2|+n^{1-(1+\eps)/k}$. \ref{2} is always possible as $\psi(t_{j_i})$ has at least $|R_2|/2>|Q|$ neighbours in $R_2$ using the defining property of $R_2$, and only vertices in $Q$ are embedded into $R_2$. Finally, we claim that $\psi$ embeds at least $(1-\eps/20)k'$ vertices in $P^+$ into $R_1$. Indeed, only vertices in $P^+$ can be embedded into $R_1$. As long as fewer than $(1-\eps/20)k'$ vertices in $P^+$ have been embedded into $R_1$, $\psi(t_{j_i})\in R_1\cup R_2$, so $\psi(t_{j_i})$ is adjacent to at least $(1-\eps/20)k'$ vertices in $R_1$ by the defining properties of $R_1$ and $R_2$, and~\ref{1} can be carried out with $t_i$ embedded into $R_1$.

Let $P'$ be the set of vertices in $P^+$ embedded into $R_1$ by $\psi$, and let $L'=N(P',L^+)$. Greedily embed every leaf in $L^+\setminus L'$, which is possible as $|L'|\geq(1-\eps/20)k'n/C\log n\gg n^{1-(1+\eps)/k}$.

Let $W$ be the set of remaining vertices in $G$, and note that $|L'|=|W|$. It remains to embed $L'$ into $W$. Again, we verify Hall's condition. Let $P'=\{p_1,\ldots,p_\ell\}$, where $\ell\geq(1-\eps/20)k'$. For every $i\in[\ell]$, let $d_i=d(p_i,L')$, and note that $d_i\geq n/C\log n$. For every $\varnothing\not=I\subset [\ell]$ with $\sum_{i\in I}d_i\leq|L'|-n^{1-(1+\eps)/k}$, $N(\psi(\{p_i:i\in I\}),W)\geq|W|-n^{1-(1+\eps)/k}\geq\sum_{i\in I}d_i$ by the minimum degree condition. Now suppose that $\sum_{i\in I}d_i>|L'|-n^{1-(1+\eps)/k}$. Then, $|[\ell]\setminus I|\leq Cn^{1-(1+\eps)/k}\log n/n\leq\eps k'/100$ by Claim~\ref{claim:ineq}, so $|I|\geq(1-\eps/15)k'$. Since every $w\in W$ is adjacent to at least $\eps k'/10$ vertices in $R_1$, it must be adjacent to some vertex in $\psi(\{p_i:i\in I\})$, so $|N(\psi(\{p_i:i\in I\}),W)|=|W|\geq\sum_{i\in I}d_i$. Therefore, Hall's condition holds, and we can apply Lemma~\ref{lemma:hallmatching} to finish the embedding.  
\end{proof}

\bibliographystyle{plain}
\bibliography{bibliography}

\end{document}